\newtheorem{theorem}{Theorem}[section]
\newtheorem*{thm*}{Theorem}
\newtheorem{proposition}[theorem]{Proposition}
\newtheorem{lem}[theorem]{Lemma}
\theoremstyle{definition}
\newtheorem{remark}{Remark}[section]
\newtheorem*{assumptions*}{Assumptions}
\DeclareMathOperator{\supp}{supp}
\providecommand{\abs}[1]{\left|#1\right|}
\providecommand{\norm}[1]{\left\lVert#1\right\rVert}
\renewcommand{\H}{\mathrm{H}}
\newcommand{\R}{\mathbb{R}}
\newcommand{\N}{\mathbb{N}}
\newcommand{\EE}{\mathbb{E}}
\renewcommand{\phi}{\varphi}
\renewcommand{\epsilon}{\varepsilon}
\newcommand{\X}{\mathcal{X}}
\title[]{A uniform rate of convergence for the entropic potentials in the quadratic Euclidean setting}
\author{Pablo L\'opez-Rivera}
\date{\today}
\address{Universit\'e Paris Cit\'e \& Sorbonne Universit\'e, CNRS, Laboratoire Jacques-Louis Lions (LJLL), F-75013 Paris, France}
\email{plopez@math.univ-paris-diderot.fr}
\begin{document}

\begin{abstract}
We bound the rate of uniform convergence in compact sets for both entropic potentials and their gradients towards the Brenier potential and its gradient, respectively. Both results hold in the quadratic Euclidean setting for absolutely continuous measures satisfying some convexity assumptions.
\end{abstract}

\maketitle

\section{Introduction}
\label{sec:section1}

\subsection{Optimal transport}
This note concerns the quadratic Euclidean optimal transport problem between absolutely continuous measures: given two probability measures $\mu$ and $\nu$ on $\R^d$ which are absolutely continuous with respect to the $d$-dimensional Lebesgue measure, we define the quadratic optimal transport problem associated to them, in its Kantorovich formulation, as
\begin{equation}
\label{eq:def_ot*}
\mathcal{C}_0(\mu, \nu):=\inf_{\pi \in \Pi(\mu, \nu)} \int_{\R^d\times \R^d}\frac12 \abs{x-y}^2\,d\pi(x,y),
\end{equation}
where $\Pi(\mu,\nu)$ denotes the set of probability measures on $\R^d\times \R^d$ with first and second marginals equal to $\mu$ and $\nu$, respectively. Among excellent monographs on general optimal transport theory, we can cite \cite{villani03, villani09,santambrogio15}. The Brenier-McCann theorem \cite{brenier91, mccann95} states that there exists a convex function $\phi_0\colon \R^d \to \R$ such that the map $T_0\colon \R^d \to \R^d$ defined by $T_0:= \nabla\phi_0$ pushes forward the measure $\mu$ towards $\nu$ and induces the unique optimal coupling $\pi_0 \in \Pi(\mu,\nu)$ for the variational problem \eqref{eq:def_ot*}. We say that $T_0$ is the Brenier or the optimal transport map and $\phi_0$ is a Brenier potential. Moreover, if we define $\psi_0:=\phi^*_0$ as the convex conjugate of $\phi_0$, the pair $(f_0,g_0):=\left(\frac12 \abs{\cdot}^2 - \phi_0, \frac12 \abs{\cdot}^2 - \psi_0\right)$ solves the dual problem to \eqref{eq:def_ot*}:
\begin{equation}
\label{eq:ot_dual*}
\mathcal{C}_0(\mu, \nu)=\sup_{\substack{(f,g) \in L^1(\mu)\times L^1(\nu), \\f\oplus g \leq \frac12 \abs{\cdot-\cdot}^2}} \int_{\R^d}f\, d\mu + \int_{\R^d}g \, d\nu = \int_{\R^d} f_0\,d\mu + \int_{\R^d} g_0\,d\nu,
\end{equation}
where for $(f,g) \in L^1(\mu)\times  L^1(\nu)$, the function $f \oplus g\colon \R^d \times \R^d \to \R$ denotes their sum, which is defined for $(x,y) \in \R^d \times \R^d$ by $(f\oplus g)(x, y) := f(x) + g(y)$.

\subsection{Entropic optimal transport}

We can regularize Problem \eqref{eq:def_ot*} by adding an entropy to the objective function: for $\epsilon>0$, we define the entropic regularization of the optimal transport problem (equivalent as well to the Schr\"odinger bridge \cite{leonard14}) as
\begin{equation}
\label{eq:def_eot*}
\mathcal{C}_\epsilon(\mu,\nu):=\inf_{\pi \in \Pi(\mu, \nu)} \int_{\R^d\times \R^d} \frac{1}{2} \abs{x-y}^2\,d\pi(x,y) + \epsilon\, \H(\pi | \mu\otimes \nu),
\end{equation}
where $\mathrm{H}(\cdot| \mu \otimes \nu)$ denotes the relative entropy functional with respect to the measure $\mu\otimes\nu$: for any $\eta \in \mathcal{P}(\R^d\times\R^d)$,
$$\mathrm{H}(\eta| \mu \otimes \nu) := \begin{cases}
    \int_{\R^d \times \R^d} \log \frac{d\eta}{d(\mu \otimes \nu)}\, d\eta, & \text{if } \eta \ll \mu \otimes \nu\\
    +\infty, & \text{if } \eta \centernot\ll \mu \otimes \nu.
\end{cases}$$
An excellent introductory reference to the subject is \cite{nutz2021introduction}.

Since the entropy functional $\mathrm{H}(\cdot| \mu \otimes \nu)$ is strictly convex, the problem \eqref{eq:def_eot*} has a unique solution that we denote by $\pi_\epsilon \in \Pi(\mu,\nu)$. On the other hand, there exists a pair of functions $(f_\epsilon,g_\epsilon) \in L^1(\mu)\times L^1(\nu)$, which is unique up to a constant, that solves the dual problem to \eqref{eq:def_eot*}:
\begin{equation}
\label{eq:eot_dual*}
\mathcal{C}_\epsilon(\mu, \nu)=\sup_{(f,g) \in L^1(\mu)\times L^1(\nu)} \int_{\R^d}f\, d\mu + \int_{\R^d}g \, d\nu - \epsilon \int_{\R^d \times \R^d} e^{\frac{f\oplus g - \frac12 \abs{\cdot-\cdot}^2}{\epsilon}} \, d(\mu \otimes \nu) + \epsilon = \int_{\R^d} f_\epsilon\,d\mu + \int_{\R^d} g_\epsilon\,d\nu.
\end{equation}
We define for each $\epsilon >0$ the entropic potentials as $(\phi_\epsilon, \psi_\epsilon) := \left(\frac12 \abs{\cdot}^2 - f_\epsilon, \frac12 \abs{\cdot}^2 - g_\epsilon\right)$, in analogy to their unregularized counterpart $(\phi_0, \psi_0)$.

\subsection{The connection between both problems}

It is known that computing $\phi_0$ or $T_0 = \nabla \phi_0$ is difficult, as one would solve the associated Monge-Amp\`ere equation \cite{figalli19}, a second-order nonlinear PDE. One of the advantages of the entropic problem is that the potentials $(\phi_\epsilon, \psi_\epsilon)$ are very tractable, numerically speaking, thanks to the Sinkhorn algorithm \cite{cuturi13, altschuler2017near, peyre2019computational}. On the other hand, as the regularization parameter $\epsilon$ vanishes, the problem \eqref{eq:def_eot*} converges in many senses to \eqref{eq:def_ot*}, a fact which allows us to approximate the optimal transport through the entropic regularization for small values of the parameter $\epsilon > 0$.

More precisely, as $\epsilon \to 0$, the entropic problem $\Gamma$-converges towards the unregularized one \cite{leonard12, carlier17}, which yields the convergence of the value functions, $\mathcal{C}_\epsilon(\mu,\nu) \to \mathcal{C}_0(\mu,\nu)$, and the convergence of the optimal couplings in the weak topology, $\pi_\epsilon \to \pi_0$. Concerning the dual optimizers, it is known that $\phi_\epsilon \to \phi_0$ (modulo subsequence) both in $L^1(\mu)$ and uniformly on compact sets \cite{gigli21, nutz22} and $\nabla \phi_\epsilon \to \nabla \phi_0$ (modulo subsequence) in $L^2(\mu)$ \cite{chiarini23}. All of the above results use $\Gamma$-convergence or compactness arguments, so a natural question is whether it is possible to quantify the rate at which these convergences happen.

In the case of the convergence of the value functions, several contributions have been made in the continuous setting: the first-order expansion $$\mathcal{C}_\epsilon(\mu,\nu) = \mathcal{C}_0(\mu,\nu) - \frac{d}2 \epsilon \log \epsilon + \frac{\epsilon}{2}\left(\H(\mu)+\H(\nu)\right) + o(\epsilon),$$
which holds as soon as $\mu$ has finite Fisher information, was proven to be true for the quadratic case and as a $\Gamma$-limit in dimension one in \cite{adams11,duong13} and in higher dimensions in \cite{erbar15}. The same expansion was established as a pointwise limit for strictly convex cost functions in \cite{pal24} (thus generalizing the quadratic case). The second-order expansion
$$\mathcal{C}_\epsilon(\mu,\nu) = \mathcal{C}_0(\mu,\nu) - \frac{d}2 \epsilon \log \epsilon + \frac{\epsilon}{2}\left(\H(\mu)+\H(\nu)\right) + \frac{\epsilon^2}{8}\mathrm{I}(\mu,\nu) + o(\epsilon^2),$$
where $\mathrm{I}(\mu,\nu)$ denotes the integrated Fisher information on the Wasserstein geodesic between $\mu$ and $\nu$, was proven for the Euclidean setting in \cite{chizat2020faster, conforti21}. For more general cost functions, the expansion
$$\mathcal{C}_\epsilon(\mu,\nu) = \mathcal{C}_0(\mu,\nu) - \frac{d}2 \epsilon \log \epsilon + O(\epsilon)$$
was established in \cite{carlier23,eckstein24}. In \cite{malamut2023convergenceratesregularizedoptimal}, the same formula was demonstrated. Additionally, the authors were able to identify the separate asymptotic behavior of $\H(\pi_\epsilon|\mu\otimes \nu)$ and $\mathcal{C}_\epsilon(\mu,\nu) - \H(\pi_\epsilon|\mu\otimes \nu)$ as $\epsilon$ vanishes.

Concerning the convergence of the optimal plans $\pi_\epsilon$, in \cite{malamut2023convergenceratesregularizedoptimal} the authors were able to quantify the 2-Wasserstein distance between $\pi_\epsilon$ and $\pi_0$ when the Brenier map $T_0$ is Lipschitz:
$$W_2(\pi_\epsilon, \pi_0) = \Theta(\sqrt{\epsilon}).$$

Up to now, the convergence of potentials in the continuous setting was only quantified in the $L^2(\mu)$ norm in terms of the difference of their gradients: in \cite{pooladian2021entropic}, for the quadratic setting under compactness and convexity assumptions, it was proven that
$$\norm{\nabla \phi_\epsilon - \nabla \phi_0}_{L^2(\mu)}^2 = O(\epsilon^2).$$
The rate
$$\norm{\nabla \phi_\epsilon - \nabla \phi_0}_{L^2(\mu)}^2 = O(-\epsilon \log \epsilon)$$
was found in \cite{carlier23} under slightly weaker assumptions. Finally, the rate
$$\norm{\nabla \phi_\epsilon - \nabla \phi_0}_{L^2(\mu)}^2 = O(\epsilon)$$
was established in \cite{malamut2023convergenceratesregularizedoptimal} for non-necessarily compactly supported measures.

This note aims to exhibit a bound for the rate of convergence of uniform convergence on compact sets of the potentials $\phi_\epsilon$ and their gradients $\nabla\phi_\epsilon$ as $\epsilon \to 0$, when both measures $\mu$ and $\nu$ are absolutely continuous with respect to the Lebesgue measure, under the quadratic cost. The following observation is the starting point of our analysis: the entropic potentials $\phi_\epsilon$ are convex functions (see Proposition \ref{prop:entropic_potentials_are_convex} below in Section \ref{sec:section2}). As mentioned above, they converge (up to a subsequence) uniformly on compact sets to $\phi_0$. It is a classical fact (see, for example, \cite[Theorem 25.7]{rockafellar70}) that convexity yields the convergence of the gradients $\nabla\phi_\epsilon$ to $\nabla\phi_0$ uniformly on compact sets as well. This justifies to search, for a fixed compact set $K\subseteq \R^d$, an asymptotic rate of convergence as $\epsilon$ goes to $0$ for $\norm{\nabla\phi_\epsilon - \nabla\phi_0}_{K,\infty}:=\sup_{x\in K} \abs{\nabla\phi_\epsilon(x)-\nabla\phi_0(x)}$.

If both $\mu$ and $\nu$ are Gaussian, we can explicitly bound $\norm{\nabla\phi_\epsilon - \nabla\phi_0}_{K,\infty}$. We present the result of this computation as an appetizer and defer its proof to the Section \ref{sec:section4}.

\begin{proposition}
\label{prop:gaussian_case}
Let $\mu=\mathcal{N}(0,A)$ and $\nu=\mathcal{N}(0,B)$ be two non-degenerate Gaussian measures with $A, B \in \mathcal{M}_d(\R)$ symmetric and positive-definite. For $R>0$, set $K:= \overline{\mathrm{B}}(0,R) \subseteq \R^d$ as the Euclidean closed ball of radius $R$. Then
\begin{equation}
\label{eq:prop_gaussian_case}
    \forall \epsilon > 0,\quad \norm{\nabla\phi_\epsilon - \nabla\phi_0}_{K,\infty}\leq R \epsilon \frac{\abs{A^{-1}}_{\mathrm{op}}}2 \left(\epsilon \frac{\abs{(A^{\frac12}BA^{\frac12})^{-1}}_\mathrm{op}^\frac{1}{2}}{4} + \epsilon^3\frac{\abs{(A^{\frac12}BA^{\frac12})^{-1}}_\mathrm{op}^\frac{3}{2}}{16} + 1\right),
\end{equation}
where $\abs{\cdot}_{\mathrm{op}}$ denotes the operator norm of a matrix.
\end{proposition}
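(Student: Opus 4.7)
The strategy is to exploit the Gaussianity of $\mu$ and $\nu$ to write the entropic potentials as explicit quadratic forms, and then reduce the estimate to a spectral bound on a single symmetric positive definite matrix.

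First, I would verify that both Schr\"odinger potentials are quadratic. Substituting the ansatz $\phi_\epsilon(x) = \tfrac{1}{2} x^\T M_\epsilon x + c_\epsilon$ and $\psi_\epsilon(y) = \tfrac{1}{2} y^\T N_\epsilon y + d_\epsilon$ into the Schr\"odinger fixed-point system arising as the first-order conditions of \eqref{eq:eot_dual*}, and computing the resulting Gaussian integrals by completing the square, matching the quadratic terms yields the two identities $M_\epsilon = (N_\epsilon + \epsilon B^{-1})^{-1}$ and $N_\epsilon = (M_\epsilon + \epsilon A^{-1})^{-1}$. Uniqueness of the Schr\"odinger potentials up to additive constants (mentioned in the introduction) then certifies that these quadratic forms are the right ones. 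Eliminating $N_\epsilon$ between the two identities reduces the problem, after a short manipulation, to the matrix equation
\[ M_\epsilon A M_\epsilon + \epsilon M_\epsilon = B. \]

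Second, I would solve this equation via the substitution $P_\epsilon := A^{1/2} M_\epsilon A^{1/2}$ and $S := A^{1/2} B A^{1/2}$, under which it becomes the commuting quadratic $P_\epsilon^2 + \epsilon P_\epsilon = S$, whose positive definite solution is $P_\epsilon = \tfrac{1}{2}\bigl( \sqrt{\epsilon^2 I + 4S} - \epsilon I\bigr)$. Since $P_0 = \sqrt{S}$ corresponds to the Brenier matrix $M_0 = A^{-1/2} \sqrt{S}\, A^{-1/2}$, one obtains the clean identity
\[ M_\epsilon - M_0 = A^{-1/2}\Big( \sqrt{S + \tfrac{\epsilon^2}{4} I} - \sqrt{S} - \tfrac{\epsilon}{2} I\Big) A^{-1/2}. \]

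Third, I would bound the right-hand side in operator norm by the triangle inequality and the identity $\abs{A^{-1/2}}_\mathrm{op}^2 = \abs{A^{-1}}_\mathrm{op}$, and control the square-root difference via functional calculus on $S$: the scalar inequality $\sqrt{1+u} \leq 1 + \tfrac{u}{2} + \tfrac{u^2}{2}$ (valid for $u \geq 0$), applied to each eigenvalue of $S$ with $u = \epsilon^2/(4s)$, produces the estimate $\tfrac{\epsilon^2}{8}\abs{S^{-1}}_\mathrm{op}^{1/2} + \tfrac{\epsilon^4}{32}\abs{S^{-1}}_\mathrm{op}^{3/2}$ for the square-root part. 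The bound \eqref{eq:prop_gaussian_case} is then obtained by collecting all factors and using $\norm{\nabla\phi_\epsilon - \nabla\phi_0}_{K,\infty} \leq R\,\abs{M_\epsilon - M_0}_\mathrm{op}$.

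The main obstacle is the first step: it is not a priori obvious that the entropic potentials are quadratic, and justifying this requires either a careful Gaussian integration in the Schr\"odinger fixed-point system followed by a uniqueness argument, or an appeal to the closed-form expressions for Gaussian entropic optimal transport already available in the literature. The remaining steps are essentially direct matrix computation and spectral analysis.
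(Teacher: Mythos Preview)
Your proposal is correct and follows the same overall architecture as the paper's proof: obtain the explicit linear maps $\nabla\phi_\epsilon(x)=M_\epsilon x$ and $\nabla\phi_0(x)=M_0 x$, write $M_\epsilon-M_0=A^{-1/2}\bigl(\sqrt{S+\tfrac{\epsilon^2}{4}I}-\sqrt{S}-\tfrac{\epsilon}{2}I\bigr)A^{-1/2}$ with $S=A^{1/2}BA^{1/2}$, and then bound the operator norm of the square-root perturbation.

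The differences lie in how the two ingredients are obtained. First, the paper simply quotes the closed formulas for $M_\epsilon$ and $M_0$ from the Gaussian entropic OT literature, whereas you rederive them from the Schr\"odinger system via the quadratic ansatz, landing on the Riccati-type equation $M_\epsilon A M_\epsilon+\epsilon M_\epsilon=B$ and solving it through the substitution $P_\epsilon=A^{1/2}M_\epsilon A^{1/2}$; this is more self-contained but requires the additional uniqueness check you flag. Second, for the bound on $\sqrt{S+\tfrac{\epsilon^2}{4}I}-\sqrt{S}$, the paper invokes a first-order matrix Taylor expansion of the square root with an integral representation of the linear term and an explicit remainder estimate, while you work eigenvalue by eigenvalue with the scalar inequality $\sqrt{1+u}\le 1+\tfrac{u}{2}+\tfrac{u^2}{2}$. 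Both routes give exactly the constants $\tfrac{\epsilon^2}{8}\lvert S^{-1}\rvert_{\mathrm{op}}^{1/2}$ and $\tfrac{\epsilon^4}{32}\lvert S^{-1}\rvert_{\mathrm{op}}^{3/2}$; yours is more elementary (indeed, the even simpler concavity bound $\sqrt{1+u}\le 1+\tfrac{u}{2}$ would already suffice and would drop the $\epsilon^3$ term), while the paper's buys a cleaner identification of the first-order coefficient as a genuine Fr\'echet derivative.
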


Proposition \ref{prop:gaussian_case} reveals that we can quantify uniform convergence on compact sets as $O(\epsilon)$ as $\epsilon \to 0$, with a dimension-free bound depending only on the size of the compact set $K$ and the operator norms of the matrices $A$ and $B$.

\subsection{Main results}
\label{sec:main_results}

Our first result gives an estimate in the spirit of Proposition \ref{prop:gaussian_case} for absolutely continuous measures $\mu,\nu \in \mathcal{P}(\R^d)$  satisfying the following convexity assumptions (see the end of Section \ref{sec:section2} for a more involved discussion on those), in order to generalize what happens in the Gaussian case:

\begin{enumerate}[({A}1)]
    \item 
    The measures $\mu, \nu \in \mathcal{P}(\R^d)$ have the form $d\mu(x) = e^{-V(x)}\, dx$ and $d\nu(y) = e^{-W(y)}\,dy$, where $V, W\colon \R^d \to \R$ are smooth functions and there exist $\alpha, \beta > 0$ such that
    \begin{equation*}
    \forall x \in \R^d, \quad \nabla^2 V(x) \preccurlyeq \alpha I_d
    \end{equation*}
    and
    \begin{equation*}
    \forall y \in \R^d, \quad \nabla^2 W(y) \succcurlyeq \beta I_d,
    \end{equation*}
    where $I_d$ is the identity matrix of dimension $d$ and $\preccurlyeq$ denotes the  L\"owner order on the set of positive semidefinite matrices.
    \item 
    The measure $\mu$ satisfies a Poincar\'e inequality: there exists $C_{\mathrm{P}}(\mu) > 0$ such that for any $h\colon \R^d \to \R$ smooth with $\int_{\R^d}h\,d\mu = 0$,
    $$\norm{h}_{L^2(\mu)}^2 \leq C_{\mathrm{P}}(\mu) \norm{\nabla h}_{L^2(\mu)}^2.$$
    \item 
    The measure $\mu$ has finite differential entropy:
    $$-\infty < \H(\mu) := -\int_{\R^d} V(x) e^{-V(x)}\, dx < +\infty.$$
\end{enumerate}

\begin{theorem}
\label{thm:main_theorem_gradients}
Let $\mu$ and $\nu$ be two probability measures on $\R^d$ that are absolutely continuous with respect to the $d$-dimensional Lebesgue measure and satisfy the assumptions (A1), (A2), and (A3). Then, for any $K\subseteq \R^d$ compact, there exists a computable constant $C_{\mathrm{grad}} = C_{\mathrm{grad}}(d,K,\mu,\nu) >0$ such that
$$\forall \epsilon > 0,\quad \norm{\nabla\phi_\epsilon - \nabla\phi_0}_{K,\infty} \leq C_{\mathrm{grad}}\, \epsilon^{\frac{1}{d+4}}.$$
\end{theorem}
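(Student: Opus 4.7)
The plan is to convert an $L^2(\mu)$-rate on $\phi_\epsilon - \phi_0$ into a pointwise rate on $\nabla\phi_\epsilon - \nabla\phi_0$ through a ``tube'' argument that exploits the uniform-in-$\epsilon$ semismoothness of the potentials on a slight enlargement of $K$.

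The preparatory ingredients are twofold. First, I would promote the known gradient rate to the potentials themselves: under \ref{as:a1}--\ref{as:a3} the measure $\mu$ is log-concave and thus satisfies a Poincar\'e inequality with some constant $C_P$, so combining this with the $L^2(\mu)$-rate $\norm{\nabla\phi_\epsilon - \nabla\phi_0}_{L^2(\mu)}^2 = O(\epsilon)$ of \cite{malamut2023convergenceratesregularizedoptimal} and the normalization $\int(\phi_\epsilon - \phi_0)\, d\mu = 0$ yields $\norm{\phi_\epsilon - \phi_0}_{L^2(\mu)}^2 = O(\epsilon)$. Second, choosing $K' := K + \overline{\mathrm{B}}(0, \delta_0)$ for a small fixed $\delta_0 > 0$, one has (under \ref{as:a1}--\ref{as:a2}) a positive lower bound $c>0$ on the density of $\mu$ on $K'$ together with a uniform-in-$\epsilon$ Hessian bound $\nabla^2\phi_\epsilon, \nabla^2\phi_0 \preceq L\, \mathrm{Id}$ on $K'$. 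The bound on $\nabla^2\phi_0$ is classical under the convexity assumptions, while the bound on $\nabla^2\phi_\epsilon$ is expected to come from the identity $\nabla^2\phi_\epsilon(x) = \epsilon^{-1}\mathrm{Cov}(Y\mid X=x)$ for $(X,Y)\sim\pi_\epsilon$ combined with a Brascamp-Lieb-type estimate using the log-concavity of $\nu$ from \ref{as:a3}.

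The main step is the tube argument. Fix $x_0 \in K$ and set $v := \nabla\phi_\epsilon(x_0) - \nabla\phi_0(x_0)$, $a := \abs{v}$, $\hat v := v/a$, and $r := a/(4L)$. Since $\nabla(\phi_\epsilon - \phi_0)$ is $2L$-Lipschitz on $K'$ and $r \leq \delta_0$ for $\epsilon$ sufficiently small (as $a\to 0$), one has $(\nabla\phi_\epsilon(x) - \nabla\phi_0(x)) \cdot \hat v \geq a/2$ on $\mathrm{B}(x_0, r) \subset K'$. Hence along each line in direction $\hat v$ crossing $\mathrm{B}(x_0, r)$ the function $h := \phi_\epsilon - \phi_0$ is $(a/2)$-strongly increasing, which produces on a cylinder $C \subset \mathrm{B}(x_0, r)$ of axis $\hat v$, height $\sim r$, and transverse radius $\sim r$, a subset of volume $\gtrsim r^d$ on which $\abs{h} \geq ar/16$. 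Using the density lower bound,
\[
C'\epsilon \;\geq\; \norm{h}_{L^2(\mu)}^2 \;\geq\; \int_C h^2 \, d\mu \;\geq\; c\left(\frac{ar}{16}\right)^2 r^d \;\gtrsim\; \frac{c\, a^{d+4}}{L^{d+2}},
\]
and rearranging gives $a \leq C_{\mathrm{grad}}\, \epsilon^{1/(d+4)}$, with $C_{\mathrm{grad}}$ depending explicitly on $c$, $L$, $C_P$ and $d$.

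The delicate point of this plan is the uniform-in-$\epsilon$ Hessian bound, where the convexity assumptions play their decisive role; once this is in hand, the Poincar\'e promotion of the gradient rate and the tube computation are both routine, and the scaling $r = a/(4L)$ produces precisely the exponent $1/(d+4)$.
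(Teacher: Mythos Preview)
Your tube argument is correct and leads to exactly the same exponent $1/(d+4)$, but it is a genuinely different route from the paper's. The paper localizes by multiplying $\phi_\epsilon-\phi_0$ with a smooth cutoff $w$ and then applies the Gagliardo--Nirenberg inequality $\norm{\nabla u}_\infty\le C\norm{\nabla^2 u}_\infty^{(d+2)/(d+4)}\norm{u}_2^{2/(d+4)}$ to $u=(\phi_\epsilon-\phi_0)w$; the price for the cutoff is that the product Hessian produces terms $(\phi_\epsilon-\phi_0)\nabla^2 w$ and $\nabla(\phi_\epsilon-\phi_0)\nabla w^\intercal$, and controlling these uniformly in $\epsilon$ requires an auxiliary lemma (the paper's Lemma~\ref{lem:correction}) that locates, for each $\epsilon$, a point where both $|\phi_\epsilon-\phi_0|$ and $|\nabla\phi_\epsilon-\nabla\phi_0|$ are bounded. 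Your approach sidesteps both the cutoff and this lemma by working directly on a small ball and using the pointwise density lower bound on $\mu$ to compare Lebesgue and $\mu$ measures; this is more elementary and self-contained, though it is in spirit a hands-on proof of the very interpolation inequality the paper invokes. Both proofs consume the same two inputs: the uniform Hessian bound (Caffarelli for $\phi_0$, the entropic Caffarelli of Chewi--Pooladian for $\phi_\epsilon$) and the $L^2(\mu)$ rate on $\phi_\epsilon-\phi_0$ coming from Poincar\'e plus \cite{malamut2023convergenceratesregularizedoptimal}.

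Two small repairs are needed. First, some assumption bookkeeping: \ref{as:a1} only gives an \emph{upper} bound on $\nabla^2 V$, so $\mu$ need not be log-concave; the Poincar\'e inequality you use is simply assumption \ref{as:a2} itself. Likewise, the $\beta$-uniform log-concavity of $\nu$ that feeds the Brascamp--Lieb bound on $\epsilon^{-1}\mathrm{Cov}(Y\mid X=x)$ is part of \ref{as:a1}, not \ref{as:a3}. Second, the step ``$r\le\delta_0$ for $\epsilon$ sufficiently small (as $a\to0$)'' is circular: $a\to0$ is the conclusion, and the theorem is stated for all $\epsilon>0$. The clean fix is to set $r:=\min(a/(4L),\delta_0)$; when $r=\delta_0$ the tube estimate gives $a^2\delta_0^{d+2}\lesssim \norm{\phi_\epsilon-\phi_0}_{L^2(\mu)}^2$, which is bounded uniformly in $\epsilon$ (e.g.\ by $4C_P\int|y|^2\,d\nu$), yielding an a priori bound on $a$ that in turn forces $r=a/(4L)$ once $\delta_0$ is chosen large enough.
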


We remark that Theorem \ref{thm:main_theorem_gradients} does not provide an optimal bound; recall Proposition \ref{prop:gaussian_case}.

Our next result is a corollary of the previous theorem: we can also quantify the convergence in compact sets of the entropic potentials. For $K\subseteq \R^d$ compact we define, doing an abuse of notation, $\norm{\phi_\epsilon - \phi_0}_{K,\infty}:=\sup_{x\in K} \abs{\phi_\epsilon(x)-\phi_0(x)}$.

\begin{theorem}
\label{thm:main_theorem_potentials}
Let $\mu$ and $\nu$ be two probability measures on $\R^d$ that are absolutely continuous with respect to the $d$-dimensional Lebesgue measure and satisfy the assumptions (A1), (A2), and (A3). In addition, suppose that the following normalization holds: for every $\epsilon > 0$,
\begin{equation}
\label{eq:main_theorem_potentials_eq}
\int_{\R^d} \phi_\epsilon \, d\mu = \int_{\R^d} \phi_0 \, d\mu = 0.
\end{equation}
Then, for any $K\subseteq \R^d$ compact and connected, there exists a computable constant $C_{\mathrm{pot}} = C_{\mathrm{pot}}(d,K,\mu,\nu)>0$ such that
$$\forall \epsilon > 0,\quad \norm{\phi_\epsilon - \phi_0}_{K,\infty} \leq C_{\mathrm{pot}}\, \left(\epsilon^{\frac{1}{d+4}} + \epsilon\right).$$
\end{theorem}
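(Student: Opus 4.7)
The theorem is presented as a corollary of Theorem \ref{thm:main_theorem_gradients}, so the plan is to convert the uniform gradient bound into a uniform potential bound via the fundamental theorem of calculus, pinning down the additive constant using the normalization \eqref{eq:main_theorem_potentials_eq}. Write $h_\epsilon := \phi_\epsilon - \phi_0$. Since both potentials are $C^1$ convex, so is $h_\epsilon$, and by hypothesis $\int h_\epsilon \, d\mu = 0$.

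First I would enlarge $K$ to a convex compact set $\tilde K$ (for instance a closed ball $\overline{\mathrm{B}}(0, R)$ with $R$ large enough to contain $K$; the radius will be chosen later depending on $\mu$ and possibly $\epsilon$). Applying Theorem \ref{thm:main_theorem_gradients} on $\tilde K$ gives $\sup_{\tilde K} |\nabla h_\epsilon| \leq C(\tilde K)\, \epsilon^{1/(d+4)}$, and the fundamental theorem of calculus along segments in $\tilde K$ converts this into
$$|h_\epsilon(x) - h_\epsilon(y)| \leq C(\tilde K)\, \epsilon^{1/(d+4)} |x - y|, \qquad \forall x, y \in \tilde K.$$
(If $K$ is only connected rather than convex, one takes $\tilde K$ to be its convex hull, which remains compact.) Integrating this estimate against $\mu$ restricted to $\tilde K$ and dividing by $\mu(\tilde K)$ yields, for every $x \in K$,
$$\left| h_\epsilon(x) - \frac{1}{\mu(\tilde K)} \int_{\tilde K} h_\epsilon \, d\mu \right| \leq C_1\, \epsilon^{1/(d+4)},$$
where $C_1$ depends on $\tilde K$, $\mu$, and $K$ through $\mathrm{diam}(\tilde K)$ and $\mu(\tilde K)$.

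The normalization enters in the remaining additive term: from $\int h_\epsilon \, d\mu = 0$, we have $\int_{\tilde K} h_\epsilon \, d\mu = -\int_{\tilde K^c} h_\epsilon \, d\mu$, a pure tail quantity. To bound it I would combine a uniform-in-$\epsilon$ growth estimate $|h_\epsilon(x)| \leq C(1 + |x|^2)$ (which follows from the convexity of $\phi_\epsilon, \phi_0$ together with the quadratic cost, and should already be available from the analysis of Section \ref{sec:section2}) with the tail decay of $\mu$ furnished by the assumptions \ref{as:a1}--\ref{as:a3} (typically log-concavity, yielding sub-Gaussian tails). Choosing $\tilde K$ appropriately --- a fixed ball if $\mu$ is compactly supported, or of radius growing like $\sqrt{-\log \epsilon}$ otherwise --- renders $\left|\int_{\tilde K^c} h_\epsilon \, d\mu\right| \leq C_2\, \epsilon$, and combining with the previous display produces the claimed bound.

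The main obstacle I anticipate is balancing the $\epsilon$-dependent choice of $\tilde K$ against the implicit dependence of $C(\tilde K)$ in Theorem \ref{thm:main_theorem_gradients} on the size of $\tilde K$: if $\tilde K$ must grow with $\epsilon$ to kill the tail, both the gradient constant and $\mathrm{diam}(\tilde K)$ grow, possibly introducing a polylogarithmic prefactor in $\epsilon^{1/(d+4)}$. This is precisely the regime where careful bookkeeping of how $C_{\mathrm{grad}}$ scales with the compact set is needed; if assumptions \ref{as:a1}--\ref{as:a3} already imply compact support for $\mu$, the tail term is absent and the $\epsilon$ summand in the stated bound is merely the price paid for handling the non-compact case.
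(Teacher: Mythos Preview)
Your approach diverges from the paper's at the step where you pin down the additive constant, and that divergence creates a real gap. The paper never performs a tail estimate and never lets the compact set depend on $\epsilon$. Instead it uses that Section~\ref{sec:section2} already provides control on $\|\phi_\epsilon-\phi_0\|_{L^2(\mu)}$ directly: by the Poincar\'e inequality for $\mu$ (assumption~\ref{as:a2}) applied to $h_\epsilon=\phi_\epsilon-\phi_0$ under the normalization~\eqref{eq:main_theorem_potentials_eq}, the $L^2(\mu)$ gradient bound~\eqref{eq:step_3_0} upgrades to $\|\phi_\epsilon-\phi_0\|_{L^2(\mu)}^2\le C_2\,\epsilon$ (this is~\eqref{eq:step_3_1}, and hence also~\eqref{eq:step_3}). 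The paper then feeds this $L^2$ information together with the $L^\infty$ gradient bound from Theorem~\ref{thm:main_theorem_gradients} into a Sobolev-type inequality on the fixed connected compact $K$ (the cited result from Evans), obtaining
\[
\|\phi_\epsilon-\phi_0\|_{K,\infty}\le C''\bigl(\|\nabla\phi_\epsilon-\nabla\phi_0\|_{K,\infty}+\epsilon\bigr),
\]
from which the claim follows immediately.

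Your route does not close as stated. First, assumption~\ref{as:a1} forces $\nabla^2 V\preccurlyeq \alpha I_d$, so $V$ has at most quadratic growth and $e^{-V}>0$ everywhere; $\mu$ has full support $\R^d$ and the compact-support shortcut you mention is never available. You are therefore committed to the $\epsilon$-dependent enlargement $\tilde K$, and exactly the obstacle you flag materializes: tracing the $R$-dependence through Step~2 of the proof of Theorem~\ref{thm:main_theorem_gradients} shows that $C_{\mathrm{grad}}(\tilde K)$ grows polynomially in the radius, so taking a radius of order $\log(1/\epsilon)$ to make the tail $\le \epsilon$ introduces polylogarithmic losses in front of $\epsilon^{1/(d+4)}$. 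The remedy is precisely the paper's move: use Poincar\'e on $\mu$ to bound $\|h_\epsilon\|_{L^2(\mu)}$ globally, which removes any need to localize the normalization or estimate a tail.
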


To the best of our knowledge, this is the first work that addresses this problem for the entropic regularization in the continuous setting. Previously in the literature, this question has been answered both in the discrete \cite{cominetti94} and semi-discrete \cite{delalande2022nearly, sadhu2024approximationratesentropicmaps} settings for the entropic regularization. On the other hand, in \cite{gonzalezsanz2024}, a rate was found for the quadratic-regularized optimal transport problem in dimension one.

\subsection{Organization}
In Section \ref{sec:section2}, we will present all the necessary preliminaries on optimal transport and its entropic regularization, and detail the assumptions for our main results, Theorems \ref{thm:main_theorem_gradients} and \ref{thm:main_theorem_potentials}. Then, in Section \ref{sec:section3}, we will prove both results. Finally, in Section \ref{sec:section4}, we will prove Proposition \ref{prop:gaussian_case}.

\subsection{Acknowledgments}
I want to thank my advisor, Max Fathi, for his guidance, corrections, and continuous support with this project. I also thank Aymeric Baradat, Joaqu\'in Fontbona, Maxime Laborde, Flavien L\'eger, and Hugo Malamut for very useful conversations. I would like to thank Michael Goldman for pointing out a mistake in a previous version of this note. I am also very grateful to the referees and the editor for their comments, which have significantly improved the quality of this work.

This work has received support under the program ``Investissement d'Avenir'' launched by the French Government and implemented by ANR, with the reference ANR-18-IdEx-0001 as part of its program ``Emergence''. It was also supported by the Agence Nationale de la Recherche (ANR) Grant ANR-23-CE40-0003 (Project CONVIVIALITY). This project has received funding from the European Union's Horizon 2020 research and innovation programme under the Marie Sk\l{}odowska-Curie grant agreement No 945332.

\section{Preliminaries, notation and assumptions}
\label{sec:section2}

In this section, we state the basic notation used in this note and then review some properties of both the Brenier and entropic potentials that complement the basic statements previously introduced. Finally, we state our main assumptions and discuss their consequences and some sufficient conditions for them to hold.

\subsection{Basic notation}

For $x, y \in \R^d$, we denote the Euclidean inner product between $x$ and $y$ as $x\cdot y$ and the Euclidean norm of $x$ as $\abs{x}:=\sqrt{x\cdot x}$. We write the set of square matrices of dimension $d \times d$ with real entries as $\mathcal{M}_{d}(\R)$ and for $M \in \mathcal{M}_{d}(\R)$, we define its Hilbert-Schmidt norm, doing an abuse of notation, as $\abs{M}:= \sqrt{\sum_{i,j=1}^d \abs{M_{ij}}^2}$. For a twice-differentiable function $u \colon \R^d \to \R$ and $x \in \R^d$, we denote its gradient as $\nabla u(x) \in \R^d$ and its Hessian matrix as $\nabla^2 u(x) \in \mathcal{M}_{d}(\R)$. For a function $u \colon \R^d \to \R$, we define its convex conjugate as
$$\forall y \in \R^d, \quad u^*(y):= \sup_{x\in \R^d}(x\cdot y - \phi(x)).$$

For a Polish space $\mathcal{X}$ equipped with its Borel $\sigma$-algebra $\mathcal{B}(\X)$, we denote by $\mathcal{P}(\mathcal{X})$ the set of probability measures on $\mathcal{X}$; typically $\mathcal{X} \in \{\R^d, \R^d \times \R^d\}$. For a non-negative measure $\eta$ on $\mathcal{X}$, we denote its support by $\supp(\eta)$, and for $p \in [1, +\infty]$, we will write $L^{p}(\eta)$ for the classical $p$-Lebesgue space, which is a Banach space if we furnish it with the $p$-norm, $\norm{\cdot}_{L^p(\eta)}$. If $\mathcal{X}=\R^d$ and $\eta$ is the $d$-dimensional Lebesgue measure, we write the associated $p$-norm as $\norm{\cdot}_p$. If we have a sequence $(u_n)_{n \in \N}$ of continuous functions $u_n \colon \mathcal{X} \to \R$, we say that it converges uniformly on compact sets to a continuous $u\colon \mathcal{X} \to \R$ if for each $K \subseteq \mathcal{X}$ compact, $\norm{u_n - u}_{K,\infty} := \sup_{x \in K} \abs{u_n(x) - u(x)} \to 0$ as $n \to \infty$. If $u\colon \mathcal{X} \to \R^d$ or $u\colon \mathcal{X} \to \mathcal{M}_{d}(\R)$, we can extend the definitions of both $\norm{\cdot}_p$ and $\norm{\cdot}_{K,\infty}$ to them by doing the following abuses of notation: $\norm{u}_p := \norm{\abs{u}}_p$ and $\norm{u}_{K,\infty}:=\norm{\abs{u}}_{K,\infty}$, where $\abs{\cdot}$ denotes the Euclidean or Hilbert-Schmidt norm depending on the codomain of $u$.

Let $\mathcal{X}$ and $\mathcal{Y}$ be two Polish spaces. If we fix two probability measures $\mu \in \mathcal{P}(\mathcal{X})$ and $\nu \in \mathcal{P}(\mathcal{Y})$, we define the associated set of coupling measures $\Pi(\mu,\nu)$, also named the set of transport plans between $\mu$ and $\nu$, as the set containing all the measures $\pi \in \mathcal{P}(\mathcal{X} \times \mathcal{Y})$ such that for every $A \subseteq \mathcal{X}$ Borel, $\pi(A \times \mathcal{Y}) = \mu(A)$ and for every $B \subseteq \mathcal{Y}$ Borel, $\pi(\mathcal{X} \times B) = \nu(B)$. We immediately observe that $\mu \otimes \nu$, the product measure between $\mu$ and $\nu$, is in $\Pi(\mu, \nu)$, so $\Pi(\mu, \nu)$ is a nonempty set. Given a Borel map $T\colon \mathcal{X} \to \mathcal{Y}$, we say that it pushes forward the measure $\mu$ towards $\nu$ if for any $B \subseteq \mathcal{Y}$ Borel, $\nu(B) = \mu(T^{-1}(B))$. We remark that the map $T$ induces a natural coupling between $\mu$ and $\nu$, which is given by the pushforward of the measure $\mu$ by the map $x \mapsto (x, T(x))$.

\subsection{Further properties of the potentials}

We recall the definitions of the Brenier potentials $(\phi_0,\psi_0)$ and their entropic counterparts $(\phi_\epsilon,\psi_\epsilon)$. Note that they are not unique, since for any constant $\alpha \in \R$, then $(\phi_0 + \alpha, \psi_0 -\alpha)$ and $(\phi_\epsilon + \alpha, \psi_\epsilon -\alpha)$ are two new pairs of Brenier and entropic potentials, respectively. Therefore, if we need to enforce the uniqueness of the potentials, an additional normalization condition has to be imposed, for example, \eqref{eq:main_theorem_potentials_eq}. Nevertheless, we remark that their gradients are uniquely determined.

Recall that for $\epsilon > 0$ we have that $(f_\epsilon, g_\epsilon) = \left(\frac12\abs{\cdot} - \phi_\epsilon, \frac12\abs{\cdot} - \psi_\epsilon\right)$. The pair $(f_\epsilon, g_\epsilon)$ is intimately linked with $\pi_\epsilon$, the optimal coupling for \eqref{eq:def_eot*}, since
\begin{equation}
\label{eq:charact_pi_eps}
\frac{d\pi_\epsilon}{d(\mu\otimes\nu)}(x,y) = \exp\left(\frac{f_\epsilon(x) + g_\epsilon(y) - \frac12 \abs{x-y}^2}{\epsilon}\right) \quad \mu \otimes \nu\text{-a.s.}
\end{equation}

A direct consequence of \eqref{eq:charact_pi_eps}, is that the pair $(f_\epsilon, g_\epsilon)$ satisfies the so-called Schr\"odinger system: for any $(x,y) \in \supp(\mu) \times \supp(\nu)$,
\begin{equation}
\label{eq:schr_eq_f}
f_\epsilon (x) = -\epsilon \log\left(\int_{\R^d}e^{\frac1\epsilon \left[g_\epsilon(z) - \frac12 \abs{x-z}^2\right]}\, d\nu(z)\right),
\end{equation}
\begin{equation}
\label{eq:schr_eq_g}
g_\epsilon (y) = -\epsilon \log\left(\int_{\R^d}e^{\frac1\epsilon \left[f_\epsilon(z) - \frac12 \abs{y-z}^2\right]}\, d\mu(z)\right).
\end{equation}
The Schr\"odinger system allows us to extend $f_\epsilon$ and $g_\epsilon$, which are defined on $\supp(\mu)$ and $\supp(\nu)$, respectively, to all of $\R^d$ since the right-hand sides of equations \eqref{eq:schr_eq_f} and \eqref{eq:schr_eq_g} are well defined on all of $\R^d$. We also note that the regularity of the quadratic cost ensures that the potentials are continuous, hence real valued \cite[Lemma 3.1]{nutz22}. Moreover, the Schrödinger system can be exploited to derive finer properties of the pair $(\phi_\epsilon,\psi_\epsilon)$ such as convexity.

\begin{proposition}
\label{prop:entropic_potentials_are_convex}
Let $\mu, \nu \in \mathcal{P}(\R^d)$. Then, for any $\epsilon >0$, the associated potentials $\phi_\epsilon$ and $\psi_\epsilon$ are convex.
\end{proposition}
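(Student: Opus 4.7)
The plan is to rewrite the Schrödinger equation \eqref{eq:schr_eq_f} in terms of $\phi_\epsilon$ and $\psi_\epsilon$ and recognize the resulting expression as a (normalized) log-Laplace transform in the variable $x$, which is convex by Hölder's inequality. The argument for $\psi_\epsilon$ is then identical after swapping the roles of $\mu$ and $\nu$ via \eqref{eq:schr_eq_g}.

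More concretely, first I would substitute $f_\epsilon = \tfrac12|\cdot|^2 - \phi_\epsilon$ and $g_\epsilon = \tfrac12|\cdot|^2 - \psi_\epsilon$ into \eqref{eq:schr_eq_f}, and expand $\tfrac12|x-y|^2 = \tfrac12|x|^2 - x\cdot y + \tfrac12|y|^2$. The $\tfrac12|y|^2$ term cancels against $g_\epsilon(y)$, leaving $-\psi_\epsilon(y)$, while the $\tfrac12|x|^2$ term comes out of the integral. After rearranging I obtain the clean identity
\begin{equation*}
\phi_\epsilon(x) = \epsilon \log \int_{\R^d} \exp\!\left(\frac{x\cdot y - \psi_\epsilon(y)}{\epsilon}\right) d\nu(y),
\end{equation*}
valid for every $x\in\R^d$. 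In particular, the right-hand side is $\epsilon$ times the log-moment generating function of the finite positive measure $d\tilde\nu(y) := e^{-\psi_\epsilon(y)/\epsilon}\,d\nu(y)$ evaluated at $x/\epsilon$ (up to the multiplicative factor $\epsilon$ in front).

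Convexity then follows from a standard Hölder argument: for $\lambda \in [0,1]$ and $x_1,x_2\in\R^d$,
\begin{equation*}
\int e^{((\lambda x_1 + (1-\lambda)x_2)\cdot y - \psi_\epsilon(y))/\epsilon}\,d\nu(y) \leq \left(\int e^{(x_1\cdot y - \psi_\epsilon(y))/\epsilon}\,d\nu(y)\right)^{\!\lambda}\!\left(\int e^{(x_2\cdot y - \psi_\epsilon(y))/\epsilon}\,d\nu(y)\right)^{\!1-\lambda},
\end{equation*}
and taking $\epsilon\log(\cdot)$ preserves the inequality (since $\epsilon > 0$ and $\log$ is increasing), yielding $\phi_\epsilon(\lambda x_1 + (1-\lambda)x_2) \leq \lambda\phi_\epsilon(x_1) + (1-\lambda)\phi_\epsilon(x_2)$. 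The analogous manipulation of \eqref{eq:schr_eq_g} gives $\psi_\epsilon(y) = \epsilon\log\int \exp((x\cdot y - \phi_\epsilon(x))/\epsilon)\,d\mu(x)$, and the same Hölder estimate yields convexity of $\psi_\epsilon$.

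There is no real obstacle here; the only point that needs minor care is ensuring that the integrals in the Schrödinger system are finite so that the manipulations are legitimate, but this is guaranteed by $(f_\epsilon, g_\epsilon) \in L^1(\mu)\times L^1(\nu)$ together with \eqref{eq:charact_pi_eps} (since $\pi_\epsilon$ is a probability measure, the right-hand sides of \eqref{eq:schr_eq_f}--\eqref{eq:schr_eq_g} are automatically well defined $\mu$- and $\nu$-a.e., and the formulas extend to every $x$, respectively $y$, by the same computations).
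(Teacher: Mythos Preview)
Your proposal is correct and is essentially identical to the paper's proof: both rewrite \eqref{eq:schr_eq_f} as $\phi_\epsilon(x) = \epsilon\log\int e^{(x\cdot y - \psi_\epsilon(y))/\epsilon}\,d\nu(y)$ and then apply H\"older's inequality with exponents $1/\lambda$ and $1/(1-\lambda)$ to deduce convexity, with the symmetric argument for $\psi_\epsilon$ via \eqref{eq:schr_eq_g}.
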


\begin{proof}
Let $\lambda \in (0,1)$ and $x_1, x_2 \in \R^d$. Then by \eqref{eq:schr_eq_f} we get
\begin{align*}
\lambda \phi_\epsilon(x_1) + (1-\lambda)\phi_\epsilon(x_2) &=  \epsilon\log\left(\left[\int_{\R^d}e^{\frac1\epsilon \left[x_1\cdot y - \psi_\epsilon(y)\right]}\, d\nu(y)\right]^\lambda \left[\int_{\R^d}e^{\frac1\epsilon \left[x_2\cdot y - \psi_\epsilon(y)\right]}\, d\nu(y)\right]^{1-\lambda}\right)\\
&\geq  \epsilon\log\left(\int_{\R^d}e^{\frac1\epsilon \left[(\lambda x_1 + (1-\lambda)x_2)\cdot y - \psi_\epsilon(y)\right]}\,d\nu(y)\right)\\
&=\phi_\epsilon(\lambda x_1 + (1-\lambda)x_2),
\end{align*}
where we used H\"older's inequality for $p = 1/\lambda$ and $q=1/(1-\lambda)$. The convexity of $\psi_\epsilon$ follows similarly from \eqref{eq:schr_eq_g}.
\end{proof}

\subsection{Assumptions}

Our main results are stated in terms of two probability measures $\mu, \nu \in \mathcal{P}(\R^d)$ that are absolutely continuous with respect to the $d$-dimensional Lebesgue measure and satisfy the following assumptions, which were already stated in Section \ref{sec:main_results}, but we repeat for convenience of the reader:
\begin{enumerate}[({A}1)]
    \item \label{as:a1} The measures $\mu, \nu \in \mathcal{P}(\R^d)$ have the form $d\mu(x) = e^{-V(x)}\, dx$ and $d\nu(y) = e^{-W(y)}\,dy$, where $V, W\colon \R^d \to \R$ are smooth functions and there exist $\alpha, \beta > 0$ such that
    \begin{equation}
    \label{eq:main_theorem_assumption_1}
    \forall x \in \R^d, \quad \nabla^2 V(x) \preccurlyeq \alpha I_d
    \end{equation}
    and
    \begin{equation}
    \label{eq:main_theorem_assumption_2}
    \forall y \in \R^d, \quad \nabla^2 W(y) \succcurlyeq \beta I_d,
    \end{equation}
    where $I_d$ is the identity matrix of dimension $d$ and $\preccurlyeq$ denotes the  L\"owner order on the set of positive semidefinite matrices.
    \item \label{as:a2} The measure $\mu$ satisfies a Poincar\'e inequality: there exists $C_{\mathrm{P}}(\mu) > 0$ such that for any $h\colon \R^d \to \R$ smooth with $\int_{\R^d}h\,d\mu = 0$,
    $$\norm{h}_{L^2(\mu)}^2 \leq C_{\mathrm{P}}(\mu) \norm{\nabla h}_{L^2(\mu)}^2.$$
    \item \label{as:a3} The measure $\mu$ has finite differential entropy:
    $$-\infty < \H(\mu) := -\int_{\R^d} V(x) e^{-V(x)}\, dx < +\infty.$$
\end{enumerate}

The main consequence of \ref{as:a1} is that we obtain quantitative control on both $\nabla^2\phi_0$ and $\nabla^2\phi_\epsilon$ for every $\epsilon >0$; that is, both $\nabla\phi_0$ and $\nabla\phi_\epsilon$ are Lipschitz and we have explicit control on the value of their Lipschitz constants. The following generalization of Caffarelli's contraction theorem \cite{caffarelli00} provides global Lipschitz regularity for the Brenier map $\nabla\phi_0$, which pushes $\mu$ towards $\nu$. The statement that we use here can be found on \cite[Theorem 1]{chewi2023entropic}, for example.

\begin{theorem}
\label{thm:caffarelli}
Suppose that both $\mu, \nu \in \mathcal{P}(\R^d)$ satisfy \ref{as:a1}. Then the Brenier map $\nabla \phi_0$ is globally Lipschitz. Moreover, the following estimate holds:
$$\forall x \in \R^d, \quad 0 \preccurlyeq \nabla^2 \phi_0(x) \preccurlyeq \sqrt{\frac{\alpha}{\beta}} I_d.$$
\end{theorem}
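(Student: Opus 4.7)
The plan is to deduce the Hessian bound from the classical maximum-principle argument applied to the Monge-Amp\`ere equation governing $\phi_0$. Since $T_0=\nabla\phi_0$ pushes $\mu$ forward to $\nu$, whenever $\phi_0$ is smooth and strictly convex one has the pointwise identity
\begin{equation*}
\log\det(\nabla^2\phi_0(x)) = W(\nabla\phi_0(x)) - V(x),\qquad x\in\R^d.
\end{equation*}
I will first sketch the argument assuming this regularity and address the technical difficulty at the end.

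The heart of the argument runs as follows. Assume the largest eigenvalue $\lambda_{\max}(\nabla^2\phi_0(\cdot))$ attains a maximum at some point $x_0$, and rotate coordinates so that $\nabla^2\phi_0(x_0)$ is diagonal with top eigenvalue $\lambda_1=\lambda_{\max}$ along $e_1$. Differentiating the Monge-Amp\`ere identity twice in the direction $e_1$ and invoking the standard expansion $\partial_e^2\log\det M = -\tr{M^{-1}\partial_e M\,M^{-1}\partial_e M}+\tr{M^{-1}\partial_e^2 M}$ produces at $x_0$ an identity between two traces involving the first two derivatives of $M:=\nabla^2\phi_0$ on the left, and a contribution of $\nabla^2 V$, $\nabla^2 W$, and $\nabla W$ along $\partial_1\nabla\phi_0(x_0)$ on the right. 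The diagonalization gives $\partial_1\nabla\phi_0(x_0)=\lambda_1 e_1$, so assumption \eqref{eq:main_theorem_assumption_2} yields a lower bound $\beta\lambda_1^2$ on the right, while \eqref{eq:main_theorem_assumption_1} contributes $-\alpha$. The remaining terms are handled through the first- and second-order optimality of the scalar function $u(x):=\partial_1^2\phi_0(x)$ at $x_0$: first-order optimality $\nabla u(x_0)=0$ annihilates the cross term $\nabla W(\nabla\phi_0)\cdot\partial_1^2\nabla\phi_0$ on the right, while second-order optimality $\nabla^2 u(x_0)\preccurlyeq 0$ forces $\partial^4_{ii11}\phi_0(x_0)\leq 0$ for every $i$, making $\tr{M^{-1}\partial_1^2 M}\leq 0$; the other trace on the left, being a sum of squares weighted by positive coefficients $1/(\lambda_i\lambda_j)$, comes in with a favorable sign. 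Assembling these inequalities yields $0\geq \beta\lambda_1^2 - \alpha$, hence $\lambda_{\max}\leq\sqrt{\alpha/\beta}$, as desired.

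The main obstacle lies precisely in justifying the two standing assumptions used above: $C^2$-smoothness of $\phi_0$ on all of $\R^d$, and the existence of a global maximum of $\lambda_{\max}(\nabla^2\phi_0)$. Neither is guaranteed a priori under \ref{as:a1} alone, as $\mu$ and $\nu$ may have full support and no quantitative decay at infinity. To circumvent this I would adopt Caffarelli's original approximation scheme: approximate $\mu$ and $\nu$ by smooth, compactly-supported, strictly log-concave measures that preserve the Hessian bounds \eqref{eq:main_theorem_assumption_1}--\eqref{eq:main_theorem_assumption_2}; invoke Caffarelli's boundary $C^{2,\gamma}$ regularity theory for the Monge-Amp\`ere equation on convex domains to obtain the necessary smoothness of the approximating potentials up to the boundary; apply the maximum principle computation at an interior maximum inside the support of the approximation of $\mu$, with boundary contributions handled via the convexity of the support of the approximating target; and finally transfer the bound to $\phi_0$ using the stability of Brenier maps under weak convergence of the marginals.
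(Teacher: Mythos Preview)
The paper does not prove this theorem at all: it is quoted as Caffarelli's contraction theorem with a citation to \cite{caffarelli00} and used as a black box, so there is no proof in the paper to compare against.

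That said, your sketch is a faithful outline of Caffarelli's original maximum-principle argument and is essentially correct. The computation you describe---differentiating the log-Monge-Amp\`ere identity twice along the top eigendirection, using first-order optimality of $u=\partial_1^2\phi_0$ at $x_0$ to kill the $\nabla W\cdot\partial_1^2\nabla\phi_0$ term and second-order optimality to make $\tr{M^{-1}\partial_1^2 M}\le 0$, then extracting $\beta\lambda_1^2\le\alpha$---is exactly the mechanism in \cite{caffarelli00}. One small point worth making explicit: the function $u=\partial_1^2\phi_0$ is not the same as $\lambda_{\max}(\nabla^2\phi_0)$ away from $x_0$, but since $\partial_1^2\phi_0(x)\le\lambda_{\max}(\nabla^2\phi_0(x))\le\lambda_{\max}(\nabla^2\phi_0(x_0))=\partial_1^2\phi_0(x_0)$, the scalar $u$ genuinely attains a maximum at $x_0$, which is what the optimality conditions require. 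Your discussion of the approximation step (compactly supported approximants, boundary regularity, stability) is also the standard way to make the argument rigorous; note that the paper itself mentions that \cite{fathi20, chewi2023entropic} give alternative routes to the same bound via the entropic regularization, which avoids the Monge-Amp\`ere regularity theory altogether.
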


For the entropic counterpart of the Brenier map, namely $\nabla \phi_\epsilon$, we can also exhibit bounds on its derivative. These bounds were used in \cite{fathi20, chewi2023entropic} to give alternative proofs of Theorem \ref{thm:caffarelli} based on the entropic regularization. Here we use the ones proven in \cite[Theorem 4]{chewi2023entropic}.

\begin{theorem}
\label{thm:chewi-pooladian}
Suppose that both $\mu, \nu \in \mathcal{P}(\R^d)$ satisfy \ref{as:a1}. Then the entropic Brenier map $\nabla \phi_\epsilon$ is globally Lipschitz. Moreover, the following estimate holds:
$$\forall x \in \R^d, \quad 0 \preccurlyeq \nabla^2 \phi_\epsilon(x) \preccurlyeq \frac12 \left(\sqrt{\frac{4\alpha}{\beta} + \epsilon^2 \alpha^2}-\epsilon \alpha\right) I_d.$$
\end{theorem}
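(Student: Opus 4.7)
The lower bound $\nabla^2\phi_\epsilon \succcurlyeq 0$ is nothing more than the convexity of $\phi_\epsilon$ already established in Proposition~\ref{prop:entropic_potentials_are_convex}, so the task reduces to the upper bound. The plan is to represent $\nabla^2\phi_\epsilon(x)$ as a rescaled conditional covariance of the entropic plan, apply the Brascamp--Lieb variance inequality to it, and close a self-consistent bound that uses both halves of \ref{as:a1}. Concretely, twice-differentiating the Schr\"odinger equation \eqref{eq:schr_eq_f} in $x$ yields the identities
\[
\nabla\phi_\epsilon(x) = \int_{\R^d} y\,d\pi_\epsilon^x(y), \qquad \nabla^2\phi_\epsilon(x) = \frac{1}{\epsilon}\Cov{\pi_\epsilon^x},
\]
where $\pi_\epsilon^x$ is the disintegration of $\pi_\epsilon$ given $X=x$; by \eqref{eq:charact_pi_eps} it has Lebesgue density proportional to $\exp(-W(y)-\psi_\epsilon(y)/\epsilon+x\cdot y/\epsilon)$, so that the Hessian of its negative log-density is $\nabla^2 W(y)+\epsilon^{-1}\nabla^2\psi_\epsilon(y) \succcurlyeq \beta I_d$ thanks to \ref{as:a1} and the convexity of $\psi_\epsilon$.

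Since $\pi_\epsilon^x$ is strongly log-concave, the Brascamp--Lieb inequality applies and gives
\[
\nabla^2\phi_\epsilon(x) \preccurlyeq \frac{1}{\epsilon}\EE_{Y\sim\pi_\epsilon^x}\bigl[\bigl(\nabla^2 W(Y)+\epsilon^{-1}\nabla^2\psi_\epsilon(Y)\bigr)^{-1}\bigr].
\]
Using only the trivial input $\nabla^2\psi_\epsilon \succcurlyeq 0$ produces the weak bound $\nabla^2\phi_\epsilon \preccurlyeq (\epsilon\beta)^{-1}I_d$, which blows up as $\epsilon\to 0$, so self-improvement is essential. This comes from iterating against the symmetric identity and bound for $\nabla^2\psi_\epsilon(y) = \epsilon^{-1}\Cov{\pi_\epsilon^y}$, in which the roles of $\mu$ and $\nu$ are exchanged and where the assumption $\nabla^2 V \preccurlyeq \alpha I_d$ enters. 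Setting $L := \sup_x \abs{\nabla^2\phi_\epsilon(x)}_{\mathrm{op}}$ and feeding a trial bound for $\nabla^2\psi_\epsilon$ back into the displayed inequality, operator monotonicity of matrix inversion distills the scheme to the self-consistent scalar inequality
\[
L\,(L+\epsilon\alpha) \leq \frac{\alpha}{\beta},
\]
whose positive root is precisely $\tfrac12\bigl(\sqrt{4\alpha/\beta+\epsilon^2\alpha^2}-\epsilon\alpha\bigr)$, and which recovers Caffarelli's constant $\sqrt{\alpha/\beta}$ (Theorem~\ref{thm:caffarelli}) in the limit $\epsilon\to 0^+$.

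The main obstacle I anticipate is closing the self-consistent loop rigorously. Because \ref{as:a1} only supplies an upper bound on $\nabla^2 V$ (with no matching lower bound), the log-concavity of the dual conditional $\pi_\epsilon^y$ required to apply Brascamp--Lieb on the $\psi_\epsilon$-side is not automatic; one must leverage the strict positivity $\epsilon^{-1}\nabla^2\phi_\epsilon \succcurlyeq 0$ and the integrability of $\mu=e^{-V}\,dx$ to secure it. Moreover, to obtain the \emph{sharp} quadratic $L(L+\epsilon\alpha) \leq \alpha/\beta$ rather than a looser linear relation, the iteration must manipulate full matrix-valued averages under both conditionals instead of collapsing them prematurely to scalar operator-norm bounds; this is the delicate matrix computation carried out in \cite{chewi2023entropic}.
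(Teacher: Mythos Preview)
The paper does not supply its own proof of this statement: Theorem~\ref{thm:chewi-pooladian} is quoted verbatim from \cite[Theorem~4]{chewi2023entropic} and used as a black box. Your sketch is therefore not competing with anything in the paper itself, and in broad strokes it does reproduce the argument of \cite{chewi2023entropic}: the covariance representation $\nabla^2\phi_\epsilon(x)=\epsilon^{-1}\Cov{\pi_\epsilon^x}$, a variance inequality on the conditional, and a self-consistent loop that closes on the quadratic $L(L+\epsilon\alpha)\leq \alpha/\beta$ are exactly the ingredients there, and you correctly identify its positive root as the stated bound.

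There is, however, one genuine confusion in the sketch. On the $\psi_\epsilon$-side you do not want Brascamp--Lieb at all: Brascamp--Lieb gives an \emph{upper} bound on covariance, whereas what the loop needs is a \emph{lower} bound on $\nabla^2\psi_\epsilon(y)=\epsilon^{-1}\Cov{\pi_\epsilon^y}$ so that it can feed back into the Brascamp--Lieb estimate for $\pi_\epsilon^x$. The right tool on that side is the Cram\'er--Rao inequality, $\Cov{\pi_\epsilon^y}\succcurlyeq\bigl(\EE_{\pi_\epsilon^y}[\nabla^2 V+\epsilon^{-1}\nabla^2\phi_\epsilon]\bigr)^{-1}$, which requires only integrability and smoothness, not log-concavity. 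This is precisely why the missing lower bound on $\nabla^2 V$ that worried you is in fact harmless: you never need $\pi_\epsilon^y$ to be log-concave. Feeding $\nabla^2 V\preccurlyeq \alpha I_d$ and $\nabla^2\phi_\epsilon\preccurlyeq L I_d$ into Cram\'er--Rao gives $\nabla^2\psi_\epsilon\succcurlyeq (\epsilon\alpha+L)^{-1}I_d$; plugging this into the Brascamp--Lieb bound on $\pi_\epsilon^x$ (which \emph{is} $\beta$-strongly log-concave by \ref{as:a1}) then yields exactly $L(L+\epsilon\alpha)\leq \alpha/\beta$. So the ``obstacle'' you anticipate---securing log-concavity of $\pi_\epsilon^y$---is a red herring; replace the second Brascamp--Lieb by Cram\'er--Rao and the argument closes cleanly.
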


As we will see in the following remark, the functional inequality provided by \ref{as:a2} also entails regularity properties for the measure $\mu$ itself.

\begin{remark}
\label{rk:finite_moments}
A Poincar\'e inequality implies that the measure has finite moments of all orders \cite[Proposition 4.4.2]{bgl14}, so under \ref{as:a2}, the measure $\mu$ will have this property.
\end{remark}

\begin{remark}
\label{rk:log-concavity_nec_cond}
Log-concavity is a sufficient condition that entails both \ref{as:a2} and \ref{as:a3}. We say that the measure $d\mu(x) = e^{-V(x)}\, dx$ is log-concave if $V$ is convex. Since we assumed $V$ to be smooth, this is equivalent to
$$\forall x \in \R^d, \quad \nabla^2 V(x) \succcurlyeq 0.$$
First, log-concavity yields the validity of a Poincar\'e inequality (see, for example, \cite[Theorem 4.6.2]{bgl14}). Second, let us see that log-concavity implies \ref{as:a3}: indeed,
$$\H(\mu) = - \int_{\R^d} V(x) e^{-V(x)}\,dx = - \int_{\R^d} V(x) \, d\mu(x) \leq -V(0) - \nabla V(0)\cdot \EE_{X\sim \mu}[X] < + \infty,$$
where we used the convexity of $V$. For the lower bound, we recall that $\mu$ has finite moments of all orders (see Remark \ref{rk:finite_moments}), so
$$-\infty < \H(\mathcal{N}) \leq \H(\mu),$$
where $\mathcal{N}$ is the $d$-dimensional Gaussian measure with the same mean and covariance as $\mu$.
\end{remark}

\begin{remark}
\label{rk:also_nu}
Note that under \ref{as:a1}, the measure $\nu$ is log-concave (see \eqref{eq:main_theorem_assumption_2}); in particular, it has finite differential entropy and finite moments of all orders as well.
\end{remark}

Under the assumption \ref{as:a1}, we saw that Theorem \ref{thm:caffarelli} ensures the Lipschitz regularity of the Brenier map quantitatively. A direct consequence of this fact is the following: we can control the difference of the gradients of the potentials in the $L^2(\mu)$ norm by the difference between both costs. 

\begin{proposition}
Suppose that the Brenier map is $L$-Lipschitz. Then
$$\norm{\nabla\phi_\epsilon-\nabla\phi_0}_{L^2(\mu)}^2 \leq 2L \langle \pi_\epsilon-\pi_0, \frac12 \abs{\cdot - \cdot}^2 \rangle = 2L\left( \mathcal{C}_\epsilon(\mu,\nu) - \epsilon\H(\pi_\epsilon | \mu \otimes \nu) - \mathcal{C}_0(\mu,\nu)\right).$$
\end{proposition}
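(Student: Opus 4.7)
The plan is to reduce the inequality to a Bregman divergence estimate for the convex conjugate $\psi_0 = \phi_0^*$, exploiting that $\nabla\phi_0$ being $L$-Lipschitz is equivalent to $\psi_0$ being $\frac{1}{L}$-strongly convex. The second equality in the statement is purely tautological: $\pi_\epsilon$ is optimal for \eqref{eq:def_eot*}, so $\langle \pi_\epsilon, \tfrac12 |\cdot-\cdot|^2\rangle = \mathcal{C}_\epsilon(\mu,\nu) - \epsilon\,\H(\pi_\epsilon | \mu\otimes\nu)$, while $\langle \pi_0, \tfrac12 |\cdot-\cdot|^2\rangle = \mathcal{C}_0(\mu,\nu)$ by definition, so I would dispatch this line first and then focus on the inequality.

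The first real step is to observe that $\nabla\phi_\epsilon$ is the barycentric projection of $\pi_\epsilon$. Differentiating the Schr\"odinger equation \eqref{eq:schr_eq_f} in $x$ yields $\nabla f_\epsilon(x) = x - \int y\,\pi_\epsilon(dy\mid x)$, hence $\nabla\phi_\epsilon(x) = \int y\,\pi_\epsilon(dy\mid x)$. Applying Jensen's inequality fiberwise,
$$\|\nabla\phi_\epsilon - \nabla\phi_0\|_{L^2(\mu)}^2 = \int \Bigl|\int (y - \nabla\phi_0(x))\,\pi_\epsilon(dy\mid x)\Bigr|^2 d\mu(x) \leq \int |y - \nabla\phi_0(x)|^2\,d\pi_\epsilon(x,y).$$

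The second step rewrites the right-hand side of the claim as a Bregman divergence integral. Using the Fenchel--Young identity
$$\tfrac12 |x-y|^2 = f_0(x) + g_0(y) + \bigl[\phi_0(x) + \psi_0(y) - x\cdot y\bigr]$$
and the fact that the bracket equals the Bregman divergence $D_{\psi_0}(y, \nabla\phi_0(x))$, which vanishes $\pi_0$-a.e. (since $\pi_0$ concentrates on the graph of $\nabla\phi_0$), one obtains
$$\langle \pi_\epsilon - \pi_0,\, \tfrac12 |\cdot-\cdot|^2 \rangle = \int D_{\psi_0}\bigl(y, \nabla\phi_0(x)\bigr)\,d\pi_\epsilon(x,y).$$
Because $\nabla\phi_0$ is $L$-Lipschitz and convex, $\psi_0$ is $\tfrac{1}{L}$-strongly convex, whence $D_{\psi_0}(y,y') \geq \tfrac{1}{2L}|y-y'|^2$. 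Combining the two displays delivers the constant $2L$ claimed.

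I do not expect a real obstacle: all the identities are classical consequences of convex duality, and the Jensen step is entirely elementary. The only technical point to keep an eye on is the differentiation under the integral sign in \eqref{eq:schr_eq_f} and the equivalence between Lipschitz regularity of $\nabla\phi_0$ and strong convexity of $\psi_0$; both are comfortably justified by assumption \ref{as:a1} and Theorem \ref{thm:caffarelli}.
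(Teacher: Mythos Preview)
Your argument is correct and is essentially the classical Ambrosio-type proof: barycentric projection plus Jensen, then rewriting the cost gap as an integrated Bregman divergence of $\psi_0$ and invoking its $\tfrac{1}{L}$-strong convexity. The paper itself does not supply a proof of this proposition; it merely cites \cite[Lemma 3.8]{malamut2023convergenceratesregularizedoptimal} and \cite[Proposition 4.5]{carlier23}, both of which trace back to the same unpublished result of Ambrosio, so your write-up is precisely what those references contain and there is nothing to compare against.
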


\begin{proof}
First, let us remark that a direct computation gives
\begin{equation}
\label{eq:gradient_eot_potential}
\forall x \in \R^d, \quad \nabla\phi_\epsilon(x) = \int_{\R^d} y \, d\pi_{\epsilon}^x(y),
\end{equation}
where for $x \in \R^d$, we have defined $d\pi_{\epsilon}^x(y) := \exp\left(\frac{f_\epsilon(x) + g_\epsilon(y) - \frac12 \abs{x-y}^2}{\epsilon}\right) \, d\nu(y)$. In other words, $(\pi_{\epsilon}^x)_{x \in \R^d}$ is a disintegration of $\pi_\epsilon$ with respect to $\mu$. Then, by Jensen's inequality, we see that
\begin{align*}
\norm{\nabla\phi_\epsilon-\nabla\phi_0}_{L^2(\mu)}^2 &= \int_{\R^d} \abs{\nabla\phi_\epsilon(x)-\nabla\phi_0(x)}^2 \, d\mu(x) = \int_{\R^d} \abs{\int_{\R^d}(y-\nabla\phi_0(x)) \, d\pi_{\epsilon}^x(y)}^2 \, d\mu(x)\\
&\leq \int_{\R^d}\int_{\R^d}  \abs{y-\nabla\phi_0(x)}^2 \, d\pi_{\epsilon}^x(y) \, d\mu(x) = \int_{\R^d\times \R^d} \abs{y-\nabla\phi_0(x)}^2 \, d\pi_{\epsilon}(x,y)\\
&= \norm{y-T_0(x)}_{L^2(\pi_\epsilon)}^2.
\end{align*}
Finally, we conclude by using the bound 
\[\norm{y-T_0(x)}_{L^2(\pi_\epsilon)}^2 \leq 2L \langle \pi_\epsilon-\pi_0, \frac12 \abs{\cdot - \cdot}^2 \rangle = 2L\left( \mathcal{C}_\epsilon(\mu,\nu) - \epsilon\H(\pi_\epsilon | \mu \otimes \nu) - \mathcal{C}_0(\mu,\nu)\right),\]
which was proven in both \cite[Lemma 3.8]{malamut2023convergenceratesregularizedoptimal} and \cite[Proposition 4.5]{carlier23} when the map $T_0$ is $L$-Lipschitz. Let us remark that this result, in turn, is based on an unpublished result of Ambrosio reported in \cite{gilgi11}.
\end{proof}

That is, if we are able to control $\langle \pi_\epsilon-\pi_0, \frac12 \abs{\cdot - \cdot}^2 \rangle$, then we can control the $L^2$ norm of the difference. In \cite[Theorem 3.7]{malamut2023convergenceratesregularizedoptimal}, we can find this control, which can be applied as a consequence of both \ref{as:a2} and \ref{as:a3}, recall Remarks \ref{rk:finite_moments} and \ref{rk:also_nu}.

\begin{theorem}{(\cite[Theorem 3.7]{malamut2023convergenceratesregularizedoptimal}).}
\label{thm:malamut-sylvestre}
Suppose that both $\mu$ and $\nu$ have finite moments of order $2 + \delta$, for some $\delta > 0$, and that both have finite differential entropy. Then there exists $C=C(d,\mu,\nu)>0$ depending on the dimension $d$ and on the moments of order $2 + \delta$ and the differential entropies of both $\mu$ and $\nu$ such that for every $\epsilon > 0$,
$$\langle \pi_\epsilon-\pi_0, \frac12 \abs{\cdot - \cdot}^2 \rangle \leq C \epsilon.$$
\end{theorem}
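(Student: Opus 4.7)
My plan starts from the identity
$$\bigl\langle \pi_\epsilon - \pi_0,\ \tfrac12|\cdot - \cdot|^2\bigr\rangle \;=\; \bigl(\mathcal{C}_\epsilon(\mu,\nu) - \mathcal{C}_0(\mu,\nu)\bigr) \;-\; \epsilon\,\H(\pi_\epsilon \mid \mu\otimes\nu),$$
obtained by combining $\mathcal{C}_\epsilon(\mu,\nu) = \langle\pi_\epsilon, \tfrac12|\cdot-\cdot|^2\rangle + \epsilon\,\H(\pi_\epsilon\mid\mu\otimes\nu)$ with $\mathcal{C}_0(\mu,\nu) = \langle\pi_0, \tfrac12|\cdot-\cdot|^2\rangle$. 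The left-hand side is nonnegative by $W_2$-optimality of $\pi_0$, and both summands on the right-hand side are known to have the common leading asymptotic $-\tfrac d 2 \epsilon\log\epsilon$ as $\epsilon\to 0^+$. The strategy is therefore to establish sharp matching two-sided asymptotics for $\mathcal{C}_\epsilon(\mu,\nu)$ and for the transport cost $\langle\pi_\epsilon,\tfrac12|\cdot-\cdot|^2\rangle$ separately, with explicit $O(\epsilon)$ remainders, so that the logarithmic divergences cancel upon subtraction.

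For the upper bound on $\mathcal{C}_\epsilon(\mu,\nu)$ I would test the entropic functional against a Gaussian-smoothed competitor $\widetilde\pi_\epsilon \in \Pi(\mu,\nu)$ inspired by the law of $(X,\,T_0(X)+\sqrt\epsilon\,Z)$ with $X\sim\mu$ and $Z\sim\mathcal{N}(0,I_d)$ independent, followed by a Sinkhorn-type reweighting in $y$ that enforces the marginal $\nu$. An explicit Gaussian computation together with a Taylor expansion of $W := -\log(d\nu/dy)$ near the graph of $T_0$ shows that $\widetilde\pi_\epsilon$ has cost $\mathcal{C}_0(\mu,\nu)+O(\epsilon)$ and relative entropy $-\tfrac d 2\log(2\pi e\epsilon)-\H(\nu)+O(1)$, yielding
$$\mathcal{C}_\epsilon(\mu,\nu) \;\le\; \langle\widetilde\pi_\epsilon,\tfrac12|\cdot-\cdot|^2\rangle + \epsilon\,\H(\widetilde\pi_\epsilon\mid\mu\otimes\nu) \;\le\; \mathcal{C}_0(\mu,\nu)-\tfrac d 2\epsilon\log\epsilon + C\epsilon.$$
For the matching lower bound I would evaluate the entropic dual \eqref{eq:eot_dual*} at the shifted Brenier pair $(f_0 + \alpha_\epsilon, g_0)$, with $\alpha_\epsilon := -\epsilon\log I_\epsilon$ and $I_\epsilon := \int e^{(f_0 \oplus g_0 - \frac12|\cdot-\cdot|^2)/\epsilon}\,d(\mu\otimes\nu)$. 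A Laplace expansion around the graph of $T_0$, where $\tfrac12|\cdot-\cdot|^2 - f_0 \oplus g_0$ vanishes and its transverse Hessian equals $\nabla^2\psi_0(T_0(\cdot))\succ 0$, gives $I_\epsilon \asymp \epsilon^{d/2}$ and hence $\alpha_\epsilon = -\tfrac d 2 \epsilon\log\epsilon + O(\epsilon)$, producing $\mathcal{C}_\epsilon(\mu,\nu) \ge \mathcal{C}_0(\mu,\nu) - \tfrac d 2\epsilon\log\epsilon - C\epsilon$.

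To derive the sharp bound on $\langle\pi_\epsilon,\tfrac12|\cdot-\cdot|^2\rangle$ I would apply the same competitor inequality $\langle\pi_\epsilon,\tfrac12|\cdot-\cdot|^2\rangle + \epsilon\,\H(\pi_\epsilon|\mu\otimes\nu) \le \langle\widetilde\pi_\epsilon,\tfrac12|\cdot-\cdot|^2\rangle + \epsilon\,\H(\widetilde\pi_\epsilon|\mu\otimes\nu)$ and combine it with a matching lower bound $\epsilon\,\H(\pi_\epsilon\mid\mu\otimes\nu) \ge -\tfrac d 2\epsilon\log\epsilon - C\epsilon$ extracted from the Schr\"odinger representation \eqref{eq:charact_pi_eps}, namely from the identity $\epsilon\,\H(\pi_\epsilon\mid\mu\otimes\nu) = \int f_\epsilon\,d\mu+\int g_\epsilon\,d\nu-\langle\pi_\epsilon,\tfrac12|\cdot-\cdot|^2\rangle$ and the previously obtained sharp estimate of $\mathcal{C}_\epsilon(\mu,\nu)$ via the partition function $\alpha_\epsilon$. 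This yields $\langle\pi_\epsilon,\tfrac12|\cdot-\cdot|^2\rangle \le \mathcal{C}_0(\mu,\nu) + C\epsilon$, which is the claim. The main technical obstacle is the Sinkhorn-projection step in constructing $\widetilde\pi_\epsilon$: a naive comparison between $\nu$ and $\nu*\mathcal{N}(0,\epsilon I_d)$ only controls errors at the scale $W_2 \sim \sqrt\epsilon$, and recovering the sharper $O(\epsilon)$ rate requires exploiting the regularity of $\log(d\nu/dy)$ at scale $\sqrt\epsilon$; this is precisely where the $(2+\delta)$-moment assumption and the finiteness of $\H(\mu)$ and $\H(\nu)$ are used.
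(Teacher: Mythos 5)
This statement is not proved in the present paper; it is imported verbatim from \cite[Theorem~3.7]{malamut2023convergenceratesregularizedoptimal} and used here only as a black box (via \eqref{eq:step_3_0}--\eqref{eq:step_3_1}). There is therefore no in-paper argument to compare against, and I can only assess your sketch on its own terms.

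Your starting identity
$\langle\pi_\epsilon - \pi_0, \tfrac12|\cdot-\cdot|^2\rangle = (\mathcal{C}_\epsilon - \mathcal{C}_0) - \epsilon\H(\pi_\epsilon|\mu\otimes\nu)$
is correct, and the overall plan (a competitor upper bound on $\mathcal{C}_\epsilon$ plus a lower bound on $\epsilon\H(\pi_\epsilon|\mu\otimes\nu)$ with matching $-\tfrac d2\epsilon\log\epsilon$ terms) is the right shape. However, the way you propose to obtain the lower bound $\epsilon\H(\pi_\epsilon|\mu\otimes\nu)\ge -\tfrac d2\epsilon\log\epsilon - C\epsilon$ is circular. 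You invoke the identity $\epsilon\H(\pi_\epsilon|\mu\otimes\nu)=\mathcal{C}_\epsilon - \langle\pi_\epsilon, \tfrac12|\cdot-\cdot|^2\rangle$ together with your dual lower bound on $\mathcal{C}_\epsilon$; but turning this into the desired estimate on $\H(\pi_\epsilon|\mu\otimes\nu)$ requires an a priori upper bound $\langle\pi_\epsilon,\tfrac12|\cdot-\cdot|^2\rangle\le\mathcal{C}_0 + O(\epsilon)$, which is exactly the claim being proved. (Since $\pi_0$ is cost-optimal, $\langle\pi_\epsilon,\tfrac12|\cdot-\cdot|^2\rangle\ge\mathcal{C}_0$, so the identity plus the two-sided estimate on $\mathcal{C}_\epsilon$ only yields the trivial $\H\ge 0$, and substituting back reproduces the assumed lower bound on $\mathcal{C}_\epsilon$ with no new information.) What is needed instead is an \emph{independent} lower bound on $\H(\pi_\epsilon|\mu\otimes\nu)$, for instance the decomposition $\H(\pi_\epsilon|\mu\otimes\nu) = \H(\mu)+\H(\nu) - \H_{\mathrm{diff}}(\pi_\epsilon)$ together with a Gaussian maximum-entropy bound on the conditional law of $Y$ given $X$ (this is precisely where the finite-differential-entropy hypothesis enters and where the $-\tfrac d2\log\epsilon$ term is generated); your sketch omits this step.

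A second, separate gap: your lower bound on $\mathcal{C}_\epsilon$ via a Laplace expansion around the graph of $T_0$ uses the transverse Hessian $\nabla^2\psi_0(T_0(\cdot))\succ 0$. The theorem is stated only under finite $(2+\delta)$-moments and finite differential entropy; under these hypotheses $\psi_0$ need not be twice differentiable, let alone have a uniformly nondegenerate Hessian. The cited result must therefore proceed through a softer construction (e.g.\ a block-approximation or mollified competitor, which needs no pointwise second-order regularity) both for the upper bound on $\mathcal{C}_\epsilon$ and to avoid the Laplace step entirely. You also correctly flag that the Sinkhorn reweighting needed to restore the $\nu$ marginal of your $\widetilde\pi_\epsilon$ is the place where a naive argument loses the $O(\epsilon)$ rate; as written, this remains an open step in your proposal.
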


That is, under our three assumptions, there exists a constant $C_1 = C_1(d, \mu, \nu) >0$ depending on $d, \mu$ and $\nu$ such that
\begin{equation}
\label{eq:step_3_0}
\norm{\nabla\phi_\epsilon - \nabla\phi_0}^2_{L^2(\mu)} \leq C_1 \epsilon.
\end{equation}

Now let us assume that the following normalization holds:
\begin{equation}
\label{eq:norms_pots}
\forall \epsilon > 0, \quad \int_{\R^d} \phi_\epsilon \, d\mu = \int_{\R^d} \phi_0 \, d\mu = 0.
\end{equation}
Note that if we are working with the gradients of the potentials, they are uniquely determined, so we may assume, without loss of generality, that \eqref{eq:norms_pots} holds. In particular, under this normalization, we can use \ref{as:a2} to control the $L^2$ norm of the difference of the potentials:
\begin{equation}
\label{eq:extra_poincare}
\norm{\phi_\epsilon - \phi_0}^2_{L^2(\mu)} \leq C_{\mathrm{P}}(\mu) \norm{\nabla\phi_\epsilon - \nabla\phi_0}^2_{L^2(\mu)}.
\end{equation}
That is, again under all our assumptions, we have that there exists a constant $C_2 = C_2(d,\mu,\nu) >0$ depending on $d, \mu$ and $\nu$ such that
\begin{equation}
\label{eq:step_3_1}
\norm{\phi_\epsilon - \phi_0}^2_{L^2(\mu)} \leq C_2 \epsilon.
\end{equation}

\section{Proof of the main results}
\label{sec:section3}

This section aims to prove Theorem \ref{thm:main_theorem_gradients} and then to obtain as a corollary Theorem \ref{thm:main_theorem_potentials}. The starting point of our proof will be the Gagliardo-Nirenberg inequality, which allows us to control the $p$-norm of the derivatives of order $j$ of a function by the $r$-norm of its derivatives of order $k$ and the $q$-norm of the function itself, where the parameters $i, j, p, q$, and $r$ satisfy some relations. We state the inequality in the following version, found in \cite[Theorem 3.1]{soudsky18}, since it allows the critical values $p=+\infty$ and $r = +\infty$. Here the ambient space is $\R^d$ and we recall the notation $\norm{\cdot}_s = \norm{\cdot}_{L^s(\mathrm{Leb})}$.

\begin{theorem}[Gagliardo-Nirenberg]
\label{thm:gn_inequality}
Let $j,k \in \N$ such that $1 \leq j < k$. Let $\theta \in [j/k,1]$, $q \in [1,+\infty]$, $p \in (1, +\infty]$, and $r \in [1,+\infty]$ such that
\begin{equation}
\label{eq:gn_inequality_aux_1}
\frac1p = \frac{j}d + \theta \left(\frac1r - \frac{k}d\right) + \frac{1-\theta}{q}
\end{equation}
and
\begin{equation}
\label{eq:gn_inequality_aux_2}
\forall 0 \leq i \leq k-j-1, \quad r^{(i)} \neq d,
\end{equation}
where $r^{(0)} := r$ and $r^{(i)}:=\frac{dr^{(i-1)}}{d-r^{(i-1)}}$ for $i \geq 1$. Then there exists a constant $C_{\mathrm{GN}} = C_{\mathrm{GN}}(j,k,d,\theta,p,q,r)>0$ such that for any $u \colon \R^d \to \R$ sufficiently smooth and integrable,
\begin{equation}
\label{eq:gn_inequality}
\norm{\nabla^j u}_p \leq C_{\mathrm{GN}} \norm{\nabla^k u}_r^\theta \norm{u}_q^{1-\theta}.
\end{equation}
\end{theorem}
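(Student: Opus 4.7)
The Gagliardo-Nirenberg inequality is a classical interpolation estimate, so my plan is to follow the standard reduction/iteration scheme, with extra care for the endpoint cases $p = \infty$ and $r = \infty$ that motivated the authors to cite the version in \cite[Theorem 3.1]{soudsky18} rather than the more familiar textbook form.

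First, I would verify that the exponent relation \eqref{eq:gn_inequality_aux_1} is forced by scaling: under $u \mapsto u(\lambda \cdot)$ the norm $\norm{\nabla^j u}_p$ scales as $\lambda^{j - d/p}$, and equating the powers of $\lambda$ on the two sides of \eqref{eq:gn_inequality} produces exactly \eqref{eq:gn_inequality_aux_1}. This step does not prove the inequality, but it pins down $\theta$ and gives a guide for which Sobolev embedding is supposed to arise at each step.

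Next, I would treat the base case $(j,k) = (0,1)$, namely $\norm{u}_p \leq C \norm{\nabla u}_r^\theta \norm{u}_q^{1-\theta}$. When $r < d$, this follows from the Sobolev embedding $W^{1,r}(\R^d) \hookrightarrow L^{r^*}(\R^d)$ with $r^* = dr/(d-r)$, combined with H\"older interpolation between $L^{r^*}$ and $L^q$. When $r > d$, Morrey's embedding $W^{1,r} \hookrightarrow L^\infty$ replaces it. When $r = \infty$, I would use the pointwise estimate $\abs{u(x) - u(y)} \leq \norm{\nabla u}_\infty \abs{x - y}$, averaging $u(y)$ over balls of an appropriately chosen radius (optimized between $\norm{u}_q$ and $\norm{\nabla u}_\infty$) to extract the claimed interpolation exponent. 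The critical case $r = d$ is precisely what \eqref{eq:gn_inequality_aux_2} excludes at this first step, since the embedding $W^{1,d} \hookrightarrow L^\infty$ fails.

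For general $(j,k)$ with $k - j \geq 2$, I would iterate: apply the base case to $v := \nabla^{k-1} u$ to bound $\norm{v}_{r^{(1)}}$ by $\norm{\nabla v}_r$ and a lower-order norm, and chain this with analogous estimates on $\nabla^{k-2}u, \dots, \nabla^{j}u$. The sequence of indices generated by successive Sobolev embeddings is precisely the $r^{(i)}$ defined in the statement, and hypothesis \eqref{eq:gn_inequality_aux_2} ensures no step hits the critical dimension. The main obstacle, and the whole reason the authors rely on \cite{soudsky18} rather than a textbook version, is handling $p = \infty$ or $r = \infty$ uniformly: the standard Sobolev embedding into $L^\infty$ fails, so one has to combine Morrey embeddings (when enough derivatives are available) with interpolation across dyadic scales, or argue directly via pointwise oscillation bounds at the length scale $(\norm{u}_q/\norm{\nabla^k u}_r)^{1/(k - j + d(1/q - 1/r))}$ dictated by the scaling.
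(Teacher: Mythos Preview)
The paper does not prove Theorem~\ref{thm:gn_inequality} at all: it is quoted verbatim from \cite[Theorem~3.1]{soudsky18} and used as a black box in Step~1 of the proof of Theorem~\ref{thm:main_theorem_gradients}. There is therefore no ``paper's own proof'' to compare your proposal against; the authors simply invoke the inequality with the specific parameters $j=1$, $k=2$, $p=\infty$, $q=2$, $r=\infty$, check that \eqref{eq:gn_inequality_aux_1} forces $\theta=(d+2)/(d+4)$ and that \eqref{eq:gn_inequality_aux_2} is vacuous since $r^{(0)}=\infty$, and move on.

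Your outline is a reasonable sketch of the classical approach to Gagliardo--Nirenberg (scaling to identify the exponents, the $(j,k)=(0,1)$ base case via Sobolev/Morrey embeddings plus H\"older interpolation, then iteration along the chain of indices $r^{(i)}$), and you correctly flag that the endpoint cases $p=\infty$, $r=\infty$ are the delicate part requiring the \cite{soudsky18} version. But for the purposes of this paper none of that machinery is reproduced or needed: the theorem is a cited tool, not a result being established here.
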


We are ready to start the proof of Theorem \ref{thm:main_theorem_gradients}.

\begin{proof}[Proof (of Theorem \ref{thm:main_theorem_gradients}):]

The proof is divided into three consecutive steps.

\textbf{Step 1:} The first step will be to choose appropriate parameters to apply the Gagliardo-Nirenberg inequality \eqref{eq:gn_inequality}. From Theorem \ref{thm:gn_inequality} and choosing
$$j=1, k=2, p =+\infty, q=2, r=+\infty,$$
the value of $\theta$ is determined by \eqref{eq:gn_inequality_aux_1}, so we get
$$\theta = \frac{d+2}{d+4} \in [1/2,1].$$
On the other hand, we notice that the condition \eqref{eq:gn_inequality_aux_2} is trivially verified since $r^{(0)} = +\infty$. Hence, there exists a constant $C_{\mathrm{GN}} = C_{\mathrm{GN}}(d) > 0$ depending on the dimension $d$ such that for any $u \colon \R^d \to \R$ sufficiently smooth and integrable,
\begin{equation}
\label{eq:gn_particular_parameters}
\norm{\nabla u}_{\infty} \leq C_{\mathrm{GN}} \norm{\nabla^2 u}_{\infty}^\frac{d+2}{d+4} \norm{u}_2^{\frac{2}{d+4}}.
\end{equation}
Now let $K \subseteq \R^d$ be a compact set, take $R>0$ such that $K \subseteq \mathrm{B}(0,R)$, where $\mathrm{B}(0,R)$ denotes the open ball with center $0$ and radius $R$, and such that $\mu(\mathrm{B}(0,R)) > 4/5$, and let $w \colon \R^d \to \R_+$ be a compactly supported smooth function satisfying the following properties:
\begin{itemize}
    \item $\forall x \in \R^d$, $0 \leq w(x) \leq 1$;
    \item $\forall x \in K$, $w(x) = 1$; and
    \item $\supp(w) \subseteq \mathrm{B}(0,R)$.
\end{itemize}
Let $\epsilon>0$. Then
\[\norm{\nabla\phi_\epsilon-\nabla\phi_0}_{K,\infty} \leq \norm{\nabla[(\phi_\epsilon-\phi_0)w]}_{\infty}.\]
Since the potentials $\phi_\epsilon$ and $\phi_0$ are smooth enough, we may apply \eqref{eq:gn_particular_parameters} to $u:=(\phi_\epsilon-\phi_0)w$, so that
\begin{equation}
\label{eq:step_1}
\norm{\nabla[(\phi_\epsilon-\phi_0)w]}_{\infty} \leq C_{\mathrm{GN}} \norm{\nabla^2[(\phi_\epsilon-\phi_0)w]}_{\infty}^\frac{d+2}{d+4} \norm{(\phi_\epsilon-\phi_0)w}_2^{\frac{2}{d+4}}.
\end{equation}

\textbf{Step 2:} Now we will bound the first term on the right-hand side of \eqref{eq:step_1}:
\begin{equation}
\label{eq:step_2_goal}
\norm{\nabla^2[(\phi_\epsilon-\phi_0)w]}_{\infty}^\frac{d+2}{d+4}.
\end{equation}
Note that
\begin{align*}
\norm{\nabla^2[(\phi_\epsilon-\phi_0)w]}_{\infty} &\leq \norm{w\nabla^2(\phi_\epsilon-\phi_0)}_{\infty} + \norm{(\phi_\epsilon-\phi_0)\nabla^2 w}_{\infty} + 2 \norm{\nabla(\phi_\epsilon-\phi_0)\nabla w^\intercal}_{\infty}\\
&\leq \norm{\nabla^2(\phi_\epsilon-\phi_0)}_{\infty} + \norm{\phi_\epsilon-\phi_0}_{
\mathrm{B}(0,R),\infty} \times \norm{\nabla^2 w}_{\infty}\\
&\quad \quad+ 2 \norm{\nabla\phi_\epsilon-\nabla\phi_0}_{\mathrm{B}(0,R),\infty} \times \norm{\nabla w}_{\infty}.
\end{align*}
For the first term on the right-hand side, \ref{as:a1} plays a key role: note that for every $\epsilon > 0$, the upper bound in Theorem \ref{thm:chewi-pooladian} is such that
$$\frac12 \left(\sqrt{\frac{4\alpha}{\beta} + \epsilon^2 \alpha^2}-\epsilon \alpha\right) \leq \sqrt{\frac{\alpha}{\beta}}.$$
In particular, using also the estimate granted by Theorem \ref{thm:caffarelli}, we get
$$\norm{\nabla^2\phi_\epsilon - \nabla^2\phi_0}_\infty \leq \norm{\nabla^2\phi_\epsilon}_\infty + \norm{\nabla^2\phi_0}_\infty \leq 2 \sqrt{\frac{\alpha}{\beta}}.$$
In order to bound the two remaining terms on the right-hand side, let us state and prove the following auxiliary lemma.

\begin{lem}
\label{lem:correction}
In the above context, there exists $M = M(R, \mu, \nu) > 0$ depending on $R$, $\mu$ and $\nu$ such that for any $\epsilon >0$, there exists $x^*_\epsilon \in \mathrm{B}(0,R)$ satisfying both
\begin{equation}
\label{eq:lem_correction_1}
\abs{\nabla\phi_\epsilon(x^*_\epsilon)-\nabla\phi_0(x^*_\epsilon)} < M
\end{equation}
and
\begin{equation}
\label{eq:lem_correction_2}
\quad \abs{\phi_\epsilon(x^*_\epsilon)-\phi_0(x^*_\epsilon)} < M.
\end{equation}
\end{lem}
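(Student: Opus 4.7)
The plan is to exploit a uniform-in-$\epsilon$ $L^2(\mu)$ bound on the gradient difference, propagate it to the potentials via the Poincar\'e inequality, and then extract a good point in $\mathrm{B}(0,R)$ with Markov's inequality.

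Since the outer proof of Theorem \ref{thm:main_theorem_gradients} only concerns the intrinsically defined gradient $\nabla\phi_\epsilon$, I would first normalize the potentials (without loss of generality) so that $\int_{\R^d}\phi_\epsilon\,d\mu = \int_{\R^d}\phi_0\,d\mu = 0$. Under this normalization, the Poincar\'e inequality \ref{as:a2} applied to the centered function $\phi_\epsilon - \phi_0$ gives
\[
\|\phi_\epsilon - \phi_0\|_{L^2(\mu)}^2 \leq C_\mathrm{P}(\mu)\,\|\nabla\phi_\epsilon - \nabla\phi_0\|_{L^2(\mu)}^2,
\]
so the task reduces to bounding $\|\nabla\phi_\epsilon - \nabla\phi_0\|_{L^2(\mu)}$ uniformly in $\epsilon$.

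For that bound, I would use two ingredients. First, Theorems \ref{thm:caffarelli} and \ref{thm:chewi-pooladian} show that both $\nabla\phi_0$ and $\nabla\phi_\epsilon$ are $L$-Lipschitz with the common constant $L = \sqrt{\alpha/\beta}$, because the upper bound in Theorem \ref{thm:chewi-pooladian} is decreasing in $\epsilon$ and reduces to $\sqrt{\alpha/\beta}$ at $\epsilon = 0$. Second, differentiating the Schr\"odinger equation \eqref{eq:schr_eq_f} yields the barycentric identity $\nabla\phi_\epsilon(x) = \EE_{\pi_\epsilon}[Y \mid X = x]$, whence $\int \nabla\phi_\epsilon\,d\mu = m_\nu$, the mean of $\nu$; the same is true for $\nabla\phi_0$ since it pushes $\mu$ forward onto $\nu$. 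Then for every $x \in \R^d$,
\[
|\nabla\phi_\epsilon(x) - m_\nu| = \left|\int (\nabla\phi_\epsilon(x) - \nabla\phi_\epsilon(y))\,d\mu(y)\right| \leq L\,(|x| + m_1(\mu)),
\]
and the finite second moments of $\mu$ (Remark \ref{rk:finite_moments}) give $\|\nabla\phi_\epsilon - \nabla\phi_0\|_{L^2(\mu)} \leq A$ for some constant $A$ independent of $\epsilon$.

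Finally, Markov's inequality combined with a union bound yields, for any $M > 0$,
\[
\mu\bigl\{|\nabla\phi_\epsilon - \nabla\phi_0| \geq M\bigr\} + \mu\bigl\{|\phi_\epsilon - \phi_0| \geq M\bigr\} \leq \frac{(1 + C_\mathrm{P}(\mu))\,A^2}{M^2}.
\]
Since $\mu(\mathrm{B}(0,R)) > 4/5$ by construction of $R$, taking $M$ large enough so that the right-hand side is strictly less than $\mu(\mathrm{B}(0,R))$ produces a point $x^*_\epsilon \in \mathrm{B}(0,R)$ outside both exceptional sets, proving \eqref{eq:lem_correction_1} and \eqref{eq:lem_correction_2}. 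The main subtle point is obtaining uniformity of the $L^2(\mu)$ bound for \emph{all} $\epsilon > 0$ (not only small ones): this is precisely why the monotonicity of the Chewi-Pooladian Hessian bound and the barycentric-projection identity both enter the argument.
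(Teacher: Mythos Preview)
Your proof is correct and follows the same strategy as the paper: obtain a uniform-in-$\epsilon$ $L^2(\mu)$ bound on $\nabla\phi_\epsilon-\nabla\phi_0$, propagate it to the potentials via the Poincar\'e inequality under the normalization \eqref{eq:norms_pots}, and then apply Chebyshev together with $\mu(\mathrm{B}(0,R))>4/5$ to extract a point in $\mathrm{B}(0,R)$ lying outside both exceptional sets. The only difference is in how the uniform $L^2(\mu)$ bound on the gradients is obtained: the paper applies Jensen to the barycentric identity to get $\int_{\R^d}|\nabla\phi_\epsilon|^2\,d\mu\le\int_{\R^d}|y|^2\,d\nu$ directly (so only the second moment of $\nu$ enters), whereas you use the uniform Lipschitz constant from Theorem~\ref{thm:chewi-pooladian} together with $\int_{\R^d}\nabla\phi_\epsilon\,d\mu=m_\nu$ and moments of $\mu$; both routes are valid.
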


\begin{proof}
To start, let us prove that
$$\sup_{\epsilon>0}\int_{\R^d} \abs{\nabla\phi_\epsilon-\nabla\phi_0}^2\, d\mu < +\infty,$$
so let $\epsilon >0$. First, recall equation \eqref{eq:gradient_eot_potential}:
\[\nabla\phi_\epsilon(x) = \int_{\R^d} y \, d\pi_{\epsilon}^x(y).\]
By Jensen's inequality and the fact that $\pi_\epsilon \in \Pi(\mu,\nu)$, we have that
\[\int_{\R^d} \abs{\nabla\phi_\epsilon(x)}^2 \, d\mu(x) \leq \int_{\R^d} \int_{\R^d} \abs{y}^2 d\pi_{\epsilon}^x(y) \, d\mu(x) = \int_{\R^d\times \R^d} \abs{y}^2 \, d\pi_\epsilon(x,y) = \int_{\R^d} \abs{y}^2 d\nu(y).\]
On the other hand, note that
\[\int_{\R^d} \abs{\nabla\phi_0(x)}^2 \, d\mu(x) = \int_{\R^d} \abs{y}^2 \, d\nu(y)\]
since the Brenier map $x \mapsto \nabla \phi_0(x)$ pushes forward $\mu$ towards $\nu$. Then
\begin{align*}
\int_{\R^d} \abs{\nabla\phi_\epsilon-\nabla\phi_0}^2\, d\mu &\leq 2 \int_{\R^d} \abs{\nabla\phi_\epsilon}^2\, d\mu + 2 \int_{\R^d} \abs{\nabla\phi_0}^2\, d\mu \leq 4 \int_{\R^d} \abs{y}^2 \, d\nu(y),
\end{align*}
so $\sup_{\epsilon>0}\int_{\R^d} \abs{\nabla\phi_\epsilon-\nabla\phi_0}^2\, d\mu < +\infty$ since $\nu$ has finite moments of all orders; recall Remarks \ref{rk:finite_moments} and \ref{rk:also_nu}.

Let $M_1 > 0$ to be fixed. By Chebyshev's inequality, we have
\[\mu(\{x \in \R^d : \abs{\nabla\phi_\epsilon(x)-\nabla\phi_0(x)} \geq M_1\}) \leq \frac{1}{M^2_1}\int_{\R^d} \abs{\nabla\phi_\epsilon-\nabla\phi_0}^2 \, d\mu \leq \frac{4}{M^2_1} \int_{\R^d} \abs{y}^2 \, d\nu(y),\]
so we may fix $M_1$ big enough such that $\frac{4}{M^2_1}\int_{\R^d} \abs{y}^2 \, d \nu(y) \leq \frac15$. Hence,
\begin{equation}
\label{eq:lem_correction_proof_1_aux_aux_aux}
\mu(\{x\in \R^d : \abs{\nabla\phi_\epsilon(x)-\nabla\phi_0(x)} < M_1\}) > \frac45,
\end{equation}
so
\begin{equation}
\label{eq:lem_correction_proof_1}
\mu(\{x\in \mathrm{B}(0,R): \abs{\nabla\phi_\epsilon(x)-\nabla\phi_0(x)} < M_1\}) > \frac35.
\end{equation}
Without loss of generality, we may assume that the normalization \eqref{eq:norms_pots} holds so that \eqref{eq:extra_poincare} is verified. Similarly to the previous argument, we deduce that there exists $M_2 > 0$ such that
\begin{equation}
\label{eq:lem_correction_proof_2}
\mu(\{x\in \mathrm{B}(0,R) : \abs{\phi_\epsilon(x)-\phi_0(x)} < M_2\}) > \frac35.
\end{equation}
Let $M  :=  \max\{M_1,M_2\}$, which depends on $R$, $\mu$, and $\nu$. If we put together \eqref{eq:lem_correction_proof_1} and \eqref{eq:lem_correction_proof_2}, we deduce that
\[\mu(\{x\in \mathrm{B}(0,R) : \abs{\phi_\epsilon(x)-\phi_0(x)} < M, \abs{\nabla\phi_\epsilon(x)-\nabla\phi_0(x)} < M\}) > 0,\]
which yields the existence of $x^*_\epsilon \in \mathrm{B}(0,R)$ such that both \eqref{eq:lem_correction_1} and \eqref{eq:lem_correction_2} hold.
\end{proof}

Recall that we wanted to bound
\begin{equation}
\label{eq:aux_new_bound_reorg_1}
\norm{\nabla\phi_\epsilon-\nabla\phi_0}_{\mathrm{B}(0,R),\infty}
\end{equation}
and
\begin{equation}
\label{eq:aux_new_bound_reorg_2}
\norm{\phi_\epsilon-\phi_0}_{
\mathrm{B}(0,R),\infty}.
\end{equation}
By Lemma \ref{lem:correction}, there exists $M = M(R,\mu, \nu)> 0$ and $x^*_\epsilon \in \mathrm{B}(0,R)$ verifying both \eqref{eq:lem_correction_1} and \eqref{eq:lem_correction_2}. For \eqref{eq:aux_new_bound_reorg_1}, note that
\begin{align*}
\sup_{x \in \mathrm{B}(0,R)}\abs{\nabla\phi_\epsilon(x)-\nabla\phi_0(x)} &\leq \sup_{x \in \mathrm{B}(0,R)}\abs{\nabla\phi_\epsilon(x)-\nabla\phi_\epsilon(x^*_\epsilon)} + \sup_{x \in \mathrm{B}(0,R)}\abs{\nabla\phi_0(x)-\nabla\phi_0(x^*_\epsilon)}\\
&\quad \quad + \abs{\nabla\phi_\epsilon(x^*_\epsilon)-\nabla\phi_0(x^*_\epsilon)}\\
& \leq 4\sqrt{\frac{\alpha}{\beta}} R + M,
\end{align*}
where we used the second-order estimates holding under \ref{as:a1} and \eqref{eq:lem_correction_1}. For the remaining term \eqref{eq:aux_new_bound_reorg_2}, observe that a Taylor expansion combined with the aforementioned second-order estimates yields
\begin{align*}
\sup_{x \in \mathrm{B}(0,R)}\abs{\phi_\epsilon(x)-\phi_0(x)} &\leq \abs{\phi_\epsilon(x^*_\epsilon)-\phi_0(x^*_\epsilon)} + 2R\abs{\nabla\phi_\epsilon(x^*_\epsilon)-\nabla\phi_0(x^*_\epsilon)} + 4R^2\sqrt{\frac{\alpha}{\beta}}\\
&\leq M + 2RM + 4R^2\sqrt{\frac{\alpha}{\beta}}.
\end{align*}
In conclusion, if we combine the three bounds, we know that there exists a constant $C = C(K,\mu,\nu) > 0$ depending on $K$ (since the constant depends on $R$, which in turn depends on $K$), $\mu$, and $\nu$ such that for every $\epsilon > 0$,
\begin{equation}
\label{eq:step_2}
\norm{\nabla^2[(\phi_\epsilon-\phi_0)w]}_{\infty}^\frac{d+2}{d+4} \leq C.
\end{equation}

\textbf{Step 3:} Now we aim to bound the remaining term in the right-hand side of \eqref{eq:step_1} in an asymptotic way in order to obtain a quantity that converges to $0$ as $\epsilon$ vanishes; that is, we want to bound
\[\norm{(\phi_\epsilon-\phi_0)w}_2^{\frac{2}{d+4}}.\]
To control this term, the bound \eqref{eq:step_3_1} will be crucial. Indeed, since $\supp(w) \subseteq \overline{\mathrm{B}}(0,R)$, then we have that
\begin{align*}
\norm{(\phi_\epsilon-\phi_0)w}_2^2 &= \int_{\overline{\mathrm{B}}(0,R)}\abs{\phi_\epsilon(x)-\phi_0(x)}^2 w(x)\,dx = \int_{\overline{\mathrm{B}}(0,R)}\abs{\phi_\epsilon(x)-\phi_0(x)}^2 e^{V(x)} w(x) e^{-V(x)}\, dx\\
&\leq \int_{\overline{\mathrm{B}}(0,R)}\abs{\phi_\epsilon(x)-\phi_0(x)}^2 e^{V(x)} \, d\mu(x)\\
&\leq \norm{e^V}_{\overline{\mathrm{B}}(0,R),\infty} \norm{\phi_\epsilon-\phi_0}_{L^2(\mu)}^2,
\end{align*}
so if we combine this with \eqref{eq:step_3_1}, we get
\begin{equation}
\label{eq:step_3}
\norm{(\phi_\epsilon-\phi_0)w}_2^{\frac{2}{d+4}} \leq C' \epsilon^{\frac{1}{d+4}},
\end{equation}
where $C' = C(d,K,\mu, \nu)>0$ is a constant depending on $d, K$ (since it depends on $R$ and the cutoff function $w$), $\mu$ and $\nu$. Finally, if we put together \eqref{eq:step_1}, \eqref{eq:step_2}, and \eqref{eq:step_3}, we deduce the existence of a constant $C_{\mathrm{grad}} = C_{\mathrm{grad}}(d,K,\mu,\nu) >0$ depending on the dimension $d$, the compact set $K$, and the measures $\mu$ and $\nu$ such that, uniformly on $\epsilon >0$,
$$\norm{\nabla\phi_\epsilon - \nabla\phi_0}_{K,\infty} \leq C_{\mathrm{grad}}\, \epsilon^{\frac{1}{d+4}}.$$

\end{proof}

We end by proving Theorem \ref{thm:main_theorem_potentials}.

\begin{proof}[Proof (of Theorem \ref{thm:main_theorem_potentials}):]
Let $K \subseteq \R^d$ be a compact and connected set. Since we have assumed that the potentials are normalized as $\int_{\R^d}\phi_\epsilon\,d\mu = \int_{\R^d}\phi_\epsilon\,d\mu = 0$, we can use \cite[Theorem 1, p. 290]{evans10} and the bound \eqref{eq:step_3} to get
$$\norm{\phi_\epsilon - \phi_0}_{K,\infty} \leq C'' \left(\norm{\nabla\phi_\epsilon - \nabla\phi_0}_{K,\infty} + \epsilon\right),$$
where $C''=C''(d,K,\mu,\nu)> 0$ is a constant depending on $d, K, \mu$ and $\nu$. The conclusion follows from the estimate given by Theorem \ref{thm:main_theorem_gradients}.
\end{proof}

\section{The Gaussian case}
\label{sec:section4}
In this section, we provide the proof of Proposition \ref{prop:gaussian_case}.
\begin{proof}[Proof (of Proposition \ref{prop:gaussian_case}):]
For Gaussian measures, we may compute explicitly $\nabla\phi_0$ and $\nabla\phi_\epsilon$ (see \cite{gelbrich90,janati2020entropic,del2020statistical,mallasto22}): both are linear maps given by
\begin{equation*}
\label{eq:gaussian_ot}
\forall x \in \R^d, \quad \nabla\phi_0(x) = A^{-\frac12}(A^{\frac12}BA^{\frac12})^{\frac12}A^{-\frac12} x
\end{equation*}
and
\begin{equation*}
\label{eq:gaussian_eot}
\forall x \in \R^d, \quad \nabla\phi_\epsilon(x) = \left(A^{-\frac12}\left(A^{\frac12}BA^{\frac12} + \frac{\epsilon^2}{4}I_d\right)^{\frac12}A^{-\frac12} - \frac\epsilon2A^{-1}\right)x.
\end{equation*}
Now fix $R > 0$ and let $K= \overline{\mathrm{B}}(0,R) \subseteq \R^d$, so that
$$\sup_{x \in K}\abs{\nabla\phi_\epsilon(x) - \nabla\phi_0(x)} = R \abs{\left(A^{-\frac12}\left(A^{\frac12}BA^{\frac12} + \frac{\epsilon^2}{4}I_d\right)^{\frac12}A^{-\frac12} - \frac\epsilon2A^{-1}\right) - A^{-\frac12}(A^{\frac12}BA^{\frac12})^{\frac12}A^{-\frac12}}_{\mathrm{op}},$$
where for a matrix $M \in \mathcal{M}_d(\R)$ we write $\abs{M}_\mathrm{op} := \sup_{\abs{x}=1}\abs{Mx}$ for its operator norm.

Let us note that we may expand the matrix $\left(A^{\frac12}BA^{\frac12} + \frac{\epsilon^2}{4}I_d\right)^{\frac12}$, using a Taylor expansion of order one for the matrix square root function around the point $A^{\frac12}BA^{\frac12}$ (see \cite[Theorem 1.1]{delmoral18}):
$$\left(A^{\frac12}BA^{\frac12} + \frac{\epsilon^2}{4}I_d\right)^{\frac12} = (A^{\frac12}BA^{\frac12})^{\frac12} + \frac{\epsilon^2}{4} \int_0^{+\infty} e^{-2t(A^{\frac12}BA^{\frac12})^{\frac12}} \, dt + R(A,B,\epsilon),$$
where $R(A,B,\epsilon)$ is a matrix such that
$$\abs{R(A,B,\epsilon)}_{\mathrm{op}} \leq \frac{\epsilon^4}{32} \lambda_{\mathrm{min}}(A^{\frac12}BA^{\frac12})^{-\frac32} = \frac{\epsilon^4}{32} \abs{(A^{\frac12}BA^{\frac12})^{-1}}_\mathrm{op}^\frac{3}{2},$$
where $\lambda_{\mathrm{min}}(\cdot)$ denotes the smallest eigenvalue of a positive-definite matrix. On the other hand, note that
$$\abs{\int_0^{+\infty} e^{-2t(A^{\frac12}BA^{\frac12})^{\frac12}} \, dt}_\mathrm{op} \leq \int_0^{+\infty} e^{-2t\lambda_{\mathrm{min}}(A^{\frac12}BA^{\frac12})^{\frac12}}\,dt = \frac{1}{2\lambda_{\mathrm{min}}(A^{\frac12}BA^{\frac12})^\frac{1}{2}} = \frac12 \abs{(A^{\frac12}BA^{\frac12})^{-1}}_\mathrm{op}^\frac{1}{2}.$$

In consequence, we have that
\begin{align*}
    \sup_{x \in K}\abs{\nabla\phi_\epsilon(x) - \nabla\phi_0(x)} &= R \abs{\left(A^{-\frac12}\left(A^{\frac12}BA^{\frac12} + \frac{\epsilon^2}{4}I_d\right)^{\frac12}A^{-\frac12} - \frac\epsilon2A^{-1}\right) - A^{-\frac12}(A^{\frac12}BA^{\frac12})^{\frac12}A^{-\frac12}}_{\mathrm{op}}\\
    &\leq R \abs{A^{-\frac12}\left(A^{\frac12}BA^{\frac12} + \frac{\epsilon^2}{4}I_d\right)^{\frac12}A^{-\frac12} - A^{-\frac12}(A^{\frac12}BA^{\frac12})^{\frac12}A^{-\frac12}}_{\mathrm{op}} + R \abs{\frac{\epsilon}{2} A^{-1}}_{\mathrm{op}}\\
    &\leq \epsilon R \frac{\abs{A^{-1}}_{\mathrm{op}}}2 \left(\epsilon \frac{\abs{(A^{\frac12}BA^{\frac12})^{-1}}_\mathrm{op}^\frac{1}{2}}{4} + \epsilon^3\frac{\abs{(A^{\frac12}BA^{\frac12})^{-1}}_\mathrm{op}^\frac{3}{2}}{16} + 1\right),
\end{align*}
so $\sup_{x \in K}\abs{\nabla\phi_\epsilon(x) - \nabla\phi_0(x)} = O(\epsilon)$ as $\epsilon \to 0$, with a constant which depends only on $R$ and the matrices $A$ and $B$.

\end{proof}

\bibliographystyle{alpha}
\bibliography{main.bib}

@misc{nutz2021introduction,
  author        = {Marcel Nutz},
  title         = {Introduction to Entropic Optimal Transport},
  year          = {2021},
  note = {Lecture notes, Columbia University}
}

@article{pooladian2021entropic,
  title={Entropic estimation of optimal transport maps},
  author={Pooladian, Aram-Alexandre and Niles-Weed, Jonathan},
  journal={arXiv preprint arXiv:2109.12004},
  year={2021}
}

@article {carlier23,
    AUTHOR = {Carlier, Guillaume and Pegon, Paul and Tamanini, Luca},
     TITLE = {Convergence rate of general entropic optimal transport costs},
   JOURNAL = {Calc. Var. Partial Differential Equations},
  FJOURNAL = {Calculus of Variations and Partial Differential Equations},
    VOLUME = {62},
      YEAR = {2023},
    NUMBER = {4},
     PAGES = {Paper No. 116, 28},
      ISSN = {0944-2669,1432-0835},
   MRCLASS = {49Q22 (49K40 49N15 94A17)},
  MRNUMBER = {4565039},
MRREVIEWER = {Hao\ Wu},
       DOI = {10.1007/s00526-023-02455-0},
       URL = {https://doi.org/10.1007/s00526-023-02455-0},
}

@article {malamut2023convergenceratesregularizedoptimal,
    AUTHOR = {Malamut, Hugo and Sylvestre, Maxime},
     TITLE = {Convergence rates of the regularized optimal transport:
              disentangling suboptimality and entropy},
   JOURNAL = {SIAM J. Math. Anal.},
  FJOURNAL = {SIAM Journal on Mathematical Analysis},
    VOLUME = {57},
      YEAR = {2025},
    NUMBER = {3},
     PAGES = {2533--2558},
      ISSN = {0036-1410,1095-7154},
   MRCLASS = {49Q22 (49K40 94A17)},
  MRNUMBER = {4907179},
MRREVIEWER = {Lukas\ Koch},
       DOI = {10.1137/23M1591554},
       URL = {https://doi.org/10.1137/23M1591554},
}

@article {soudsky18,
    AUTHOR = {Soudsk\'y, Filip and Molchanova, Anastasia and Roskovec,
              Tom\'a\v{s}},
     TITLE = {Interpolation between {H}\"older and {L}ebesgue spaces with
              applications},
   JOURNAL = {J. Math. Anal. Appl.},
  FJOURNAL = {Journal of Mathematical Analysis and Applications},
    VOLUME = {466},
      YEAR = {2018},
    NUMBER = {1},
     PAGES = {160--168},
      ISSN = {0022-247X,1096-0813},
   MRCLASS = {46E35},
  MRNUMBER = {3818110},
MRREVIEWER = {Isabel\ Marrero},
       DOI = {10.1016/j.jmaa.2018.05.067},
       URL = {https://doi.org/10.1016/j.jmaa.2018.05.067},
}

@article {brenier91,
    AUTHOR = {Brenier, Yann},
     TITLE = {Polar factorization and monotone rearrangement of
              vector-valued functions},
   JOURNAL = {Comm. Pure Appl. Math.},
  FJOURNAL = {Communications on Pure and Applied Mathematics},
    VOLUME = {44},
      YEAR = {1991},
    NUMBER = {4},
     PAGES = {375--417},
      ISSN = {0010-3640,1097-0312},
   MRCLASS = {46E40 (35Q99 46E99 49Q99)},
  MRNUMBER = {1100809},
MRREVIEWER = {Robert\ McOwen},
       DOI = {10.1002/cpa.3160440402},
       URL = {https://doi.org/10.1002/cpa.3160440402},
}

@inproceedings{cuturi13,
 author = {Cuturi, Marco},
 booktitle = {Advances in Neural Information Processing Systems},
 editor = {C.J. Burges and L. Bottou and M. Welling and Z. Ghahramani and K.Q. Weinberger},
 pages = {},
 publisher = {Curran Associates, Inc.},
 title = {Sinkhorn Distances: Lightspeed Computation of Optimal Transport},
 url = {https://proceedings.neurips.cc/paper_files/paper/2013/file/af21d0c97db2e27e13572cbf59eb343d-Paper.pdf},
 volume = {26},
 year = {2013}
}

@article{altschuler2017near,
  title={Near-linear time approximation algorithms for optimal transport via Sinkhorn iteration},
  author={Altschuler, Jason and Niles-Weed, Jonathan and Rigollet, Philippe},
  journal={Advances in neural information processing systems},
  volume={30},
  year={2017}
}

@article {leonard14,
    AUTHOR = {L{\'{e}}onard, Christian},
     TITLE = {A survey of the {S}chr\"odinger problem and some of its
              connections with optimal transport},
   JOURNAL = {Discrete Contin. Dyn. Syst.},
  FJOURNAL = {Discrete and Continuous Dynamical Systems. Series A},
    VOLUME = {34},
      YEAR = {2014},
    NUMBER = {4},
     PAGES = {1533--1574},
      ISSN = {1078-0947,1553-5231},
   MRCLASS = {60J25 (46N10 60F10)},
  MRNUMBER = {3121631},
MRREVIEWER = {Nicolas\ Juillet},
       DOI = {10.3934/dcds.2014.34.1533},
       URL = {https://doi.org/10.3934/dcds.2014.34.1533},
}

@article {figalli19,
    AUTHOR = {Figalli, Alessio},
     TITLE = {On the {M}onge-{A}mp\`ere equation},
   JOURNAL = {Ast\'erisque},
  FJOURNAL = {Ast\'erisque},
    NUMBER = {414},
      YEAR = {2019},
     PAGES = {Exp. No. 1148, 477--503},
      ISSN = {0303-1179,2492-5926},
      ISBN = {978-2-85629-915-9},
   MRCLASS = {35J96 (35J60)},
  MRNUMBER = {4093207},
MRREVIEWER = {Mark\ Allen},
       DOI = {10.24033/ast.1092},
       URL = {https://doi.org/10.24033/ast.1092},
}

@book {villani09,
    AUTHOR = {Villani, C\'{e}dric},
     TITLE = {Optimal transport},
    SERIES = {Grundlehren der mathematischen Wissenschaften [Fundamental
              Principles of Mathematical Sciences]},
    VOLUME = {338},
      NOTE = {Old and new},
 PUBLISHER = {Springer-Verlag, Berlin},
      YEAR = {2009},
     PAGES = {xxii+973},
      ISBN = {978-3-540-71049-3},
   MRCLASS = {49-02 (28A75 37J50 49Q20 53C23 58E30)},
  MRNUMBER = {2459454},
MRREVIEWER = {Dario\ Cordero-Erausquin},
       DOI = {10.1007/978-3-540-71050-9},
       URL = {https://doi.org/10.1007/978-3-540-71050-9},
}

@book {villani03,
    AUTHOR = {Villani, C\'{e}dric},
     TITLE = {Topics in optimal transportation},
    SERIES = {Graduate Studies in Mathematics},
    VOLUME = {58},
 PUBLISHER = {American Mathematical Society, Providence, RI},
      YEAR = {2003},
     PAGES = {xvi+370},
      ISBN = {0-8218-3312-X},
   MRCLASS = {90-02 (28D05 35B65 35J60 49N90 49Q20 90B20)},
  MRNUMBER = {1964483},
       DOI = {10.1090/gsm/058},
       URL = {https://doi.org/10.1090/gsm/058},
}

@book {santambrogio15,
    AUTHOR = {Santambrogio, Filippo},
     TITLE = {Optimal transport for applied mathematicians},
    SERIES = {Progress in Nonlinear Differential Equations and their
              Applications},
    VOLUME = {87},
      NOTE = {Calculus of variations, PDEs, and modeling},
 PUBLISHER = {Birkh\"auser/Springer, Cham},
      YEAR = {2015},
     PAGES = {xxvii+353},
      ISBN = {978-3-319-20827-5; 978-3-319-20828-2},
   MRCLASS = {49-02 (35J96 49J45 49M29 58E50 90C05 90C48 91B02)},
  MRNUMBER = {3409718},
MRREVIEWER = {Luigi\ De Pascale},
       DOI = {10.1007/978-3-319-20828-2},
       URL = {https://doi.org/10.1007/978-3-319-20828-2},
}

@article {leonard12,
    AUTHOR = {L{\'e}onard, Christian},
     TITLE = {From the {S}chr\"odinger problem to the {M}onge-{K}antorovich
              problem},
   JOURNAL = {J. Funct. Anal.},
  FJOURNAL = {Journal of Functional Analysis},
    VOLUME = {262},
      YEAR = {2012},
    NUMBER = {4},
     PAGES = {1879--1920},
      ISSN = {0022-1236,1096-0783},
   MRCLASS = {49Q20 (46N10)},
  MRNUMBER = {2873864},
MRREVIEWER = {Luca\ Granieri},
       DOI = {10.1016/j.jfa.2011.11.026},
       URL = {https://doi.org/10.1016/j.jfa.2011.11.026},
}

@article {carlier17,
    AUTHOR = {Carlier, Guillaume and Duval, Vincent and Peyr\'{e}, Gabriel and
              Schmitzer, Bernhard},
     TITLE = {Convergence of entropic schemes for optimal transport and
              gradient flows},
   JOURNAL = {SIAM J. Math. Anal.},
  FJOURNAL = {SIAM Journal on Mathematical Analysis},
    VOLUME = {49},
      YEAR = {2017},
    NUMBER = {2},
     PAGES = {1385--1418},
      ISSN = {0036-1410,1095-7154},
   MRCLASS = {49Q20 (65K05 90C25)},
  MRNUMBER = {3635459},
MRREVIEWER = {Kathrin\ Welker},
       DOI = {10.1137/15M1050264},
       URL = {https://doi.org/10.1137/15M1050264},
}

@article {nutz22,
    AUTHOR = {Nutz, Marcel and Wiesel, Johannes},
     TITLE = {Entropic optimal transport: convergence of potentials},
   JOURNAL = {Probab. Theory Related Fields},
  FJOURNAL = {Probability Theory and Related Fields},
    VOLUME = {184},
      YEAR = {2022},
    NUMBER = {1-2},
     PAGES = {401--424},
      ISSN = {0178-8051,1432-2064},
   MRCLASS = {49Q22 (49J45)},
  MRNUMBER = {4498514},
MRREVIEWER = {Yuxin\ Ge},
       DOI = {10.1007/s00440-021-01096-8},
       URL = {https://doi.org/10.1007/s00440-021-01096-8},
}

@article {gigli21,
    AUTHOR = {Gigli, Nicola and Tamanini, Luca},
     TITLE = {Second order differentiation formula on {$\mathrm{RCD}^*(K,N)$}
              spaces},
   JOURNAL = {J. Eur. Math. Soc. (JEMS)},
  FJOURNAL = {Journal of the European Mathematical Society (JEMS)},
    VOLUME = {23},
      YEAR = {2021},
    NUMBER = {5},
     PAGES = {1727--1795},
      ISSN = {1435-9855,1435-9863},
   MRCLASS = {53C23},
  MRNUMBER = {4244516},
MRREVIEWER = {Luis\ Guijarro},
       DOI = {10.4171/JEMS/1042},
       URL = {https://doi.org/10.4171/JEMS/1042},
}

@article {cominetti94,
    AUTHOR = {Cominetti, Roberto and San Mart\'in, Jaime},
     TITLE = {Asymptotic analysis of the exponential penalty trajectory in
              linear programming},
   JOURNAL = {Math. Programming},
  FJOURNAL = {Mathematical Programming},
    VOLUME = {67},
      YEAR = {1994},
    NUMBER = {2},
     PAGES = {169--187},
      ISSN = {0025-5610,1436-4646},
   MRCLASS = {90C05},
  MRNUMBER = {1305565},
MRREVIEWER = {F.\ Szidarovszky},
       DOI = {10.1007/BF01582220},
       URL = {https://doi.org/10.1007/BF01582220},
}

@inproceedings{delalande2022nearly,
  title={Nearly tight convergence bounds for semi-discrete entropic optimal transport},
  author={Delalande, Alex},
  booktitle={International Conference on Artificial Intelligence and Statistics},
  pages={1619--1642},
  year={2022},
  organization={PMLR}
}

@article{gonzalezsanz2024,
  title={Sparsity of Quadratically Regularized Optimal Transport: Scalar Case},
  author={Gonz{\'a}lez-Sanz, Alberto and Nutz, Marcel},
  journal={arXiv preprint arXiv:2410.03353},
  year={2024}
}

@article {chiarini23,
    AUTHOR = {Chiarini, Alberto and Conforti, Giovanni and Greco, Giacomo
              and Tamanini, Luca},
     TITLE = {Gradient estimates for the {S}chr\"odinger potentials:
              convergence to the {B}renier map and quantitative stability},
   JOURNAL = {Comm. Partial Differential Equations},
  FJOURNAL = {Communications in Partial Differential Equations},
    VOLUME = {48},
      YEAR = {2023},
    NUMBER = {6},
     PAGES = {895--943},
      ISSN = {0360-5302,1532-4133},
   MRCLASS = {47D07 (49Q22 53C21 60E15)},
  MRNUMBER = {4643676},
MRREVIEWER = {Yuguo\ Lin},
       DOI = {10.1080/03605302.2023.2215527},
       URL = {https://doi.org/10.1080/03605302.2023.2215527},
}

@article {pal24,
    AUTHOR = {Pal, Soumik},
     TITLE = {On the difference between entropic cost and the optimal
              transport cost},
   JOURNAL = {Ann. Appl. Probab.},
  FJOURNAL = {The Annals of Applied Probability},
    VOLUME = {34},
      YEAR = {2024},
    NUMBER = {1B},
     PAGES = {1003--1028},
      ISSN = {1050-5164,2168-8737},
   MRCLASS = {49Q22 (46N10 60F10 91G10 94A17)},
  MRNUMBER = {4700251},
       DOI = {10.1214/23-aap1983},
       URL = {https://doi.org/10.1214/23-aap1983},
}

@article{chizat2020faster,
  title={Faster {W}asserstein distance estimation with the {S}inkhorn divergence},
  author={Chizat, Lenaic and Roussillon, Pierre and L{\'e}ger, Flavien and Vialard, Fran{\c{c}}ois-Xavier and Peyr{\'e}, Gabriel},
  journal={Advances in Neural Information Processing Systems},
  volume={33},
  pages={2257--2269},
  year={2020}
}

@article {conforti21,
    AUTHOR = {Conforti, Giovanni and Tamanini, Luca},
     TITLE = {A formula for the time derivative of the entropic cost and
              applications},
   JOURNAL = {J. Funct. Anal.},
  FJOURNAL = {Journal of Functional Analysis},
    VOLUME = {280},
      YEAR = {2021},
    NUMBER = {11},
     PAGES = {Paper No. 108964, 48},
      ISSN = {0022-1236,1096-0783},
   MRCLASS = {49Q22 (28A33 49J45 49N80 60J60)},
  MRNUMBER = {4232667},
       DOI = {10.1016/j.jfa.2021.108964},
       URL = {https://doi.org/10.1016/j.jfa.2021.108964},
}

@article {eckstein24,
    AUTHOR = {Eckstein, Stephan and Nutz, Marcel},
     TITLE = {Convergence rates for regularized optimal transport via
              quantization},
   JOURNAL = {Math. Oper. Res.},
  FJOURNAL = {Mathematics of Operations Research},
    VOLUME = {49},
      YEAR = {2024},
    NUMBER = {2},
     PAGES = {1223--1240},
      ISSN = {0364-765X,1526-5471},
   MRCLASS = {49Q22},
  MRNUMBER = {4755769},
}

@article {mccann95,
    AUTHOR = {McCann, Robert J.},
     TITLE = {Existence and uniqueness of monotone measure-preserving maps},
   JOURNAL = {Duke Math. J.},
  FJOURNAL = {Duke Mathematical Journal},
    VOLUME = {80},
      YEAR = {1995},
    NUMBER = {2},
     PAGES = {309--323},
      ISSN = {0012-7094,1547-7398},
   MRCLASS = {49Q20 (60E05)},
  MRNUMBER = {1369395},
MRREVIEWER = {Carlos\ Matr\'an},
       DOI = {10.1215/S0012-7094-95-08013-2},
       URL = {https://doi.org/10.1215/S0012-7094-95-08013-2},
}

@article {caffarelli00,
    AUTHOR = {Caffarelli, Luis A.},
     TITLE = {Monotonicity properties of optimal transportation and the
              {FKG} and related inequalities},
   JOURNAL = {Comm. Math. Phys.},
  FJOURNAL = {Communications in Mathematical Physics},
    VOLUME = {214},
      YEAR = {2000},
    NUMBER = {3},
     PAGES = {547--563},
      ISSN = {0010-3616,1432-0916},
   MRCLASS = {60E15 (35R35 82B31)},
  MRNUMBER = {1800860},
MRREVIEWER = {Ludger\ R\"uschendorf},
       DOI = {10.1007/s002200000257},
       URL = {https://doi.org/10.1007/s002200000257},
}

@article {chewi2023entropic,
    AUTHOR = {Chewi, Sinho and Pooladian, Aram-Alexandre},
     TITLE = {An entropic generalization of {C}affarelli's contraction
              theorem via covariance inequalities},
   JOURNAL = {C. R. Math. Acad. Sci. Paris},
  FJOURNAL = {Comptes Rendus Math\'ematique. Acad\'emie des Sciences. Paris},
    VOLUME = {361},
      YEAR = {2023},
     PAGES = {1471--1482},
      ISSN = {1631-073X,1778-3569},
   MRCLASS = {60E15 (49N60 49Q22)},
  MRNUMBER = {4683324},
       DOI = {10.5802/crmath.486},
       URL = {https://doi.org/10.5802/crmath.486},
}

@book {rockafellar70,
    AUTHOR = {Rockafellar, R. Tyrrell},
     TITLE = {Convex analysis},
    SERIES = {Princeton Mathematical Series},
    VOLUME = {No. 28},
 PUBLISHER = {Princeton University Press, Princeton, NJ},
      YEAR = {1970},
     PAGES = {xviii+451},
   MRCLASS = {26.52 (46.00)},
  MRNUMBER = {274683},
MRREVIEWER = {Ky\ Fan},
}

@article {gelbrich90,
    AUTHOR = {Gelbrich, Matthias},
     TITLE = {On a formula for the {$L^2$} {W}asserstein metric between
              measures on {E}uclidean and {H}ilbert spaces},
   JOURNAL = {Math. Nachr.},
  FJOURNAL = {Mathematische Nachrichten},
    VOLUME = {147},
      YEAR = {1990},
     PAGES = {185--203},
      ISSN = {0025-584X,1522-2616},
   MRCLASS = {62H05 (60B11 62E10)},
  MRNUMBER = {1127323},
MRREVIEWER = {Juan\ A.\ Cuesta-Albertos},
       DOI = {10.1002/mana.19901470121},
       URL = {https://doi.org/10.1002/mana.19901470121},
}

@article{janati2020entropic,
  title={Entropic optimal transport between unbalanced gaussian measures has a closed form},
  author={Janati, Hicham and Muzellec, Boris and Peyr{\'e}, Gabriel and Cuturi, Marco},
  journal={Advances in neural information processing systems},
  volume={33},
  pages={10468--10479},
  year={2020}
}

@article {mallasto22,
    AUTHOR = {Mallasto, Anton and Gerolin, Augusto and Minh, H\`a{} Quang},
     TITLE = {Entropy-regularized 2-{W}asserstein distance between
              {G}aussian measures},
   JOURNAL = {Inf. Geom.},
  FJOURNAL = {Information Geometry},
    VOLUME = {5},
      YEAR = {2022},
    NUMBER = {1},
     PAGES = {289--323},
      ISSN = {2511-2481,2511-249X},
   MRCLASS = {62R99 (49Q22)},
  MRNUMBER = {4452151},
       DOI = {10.1007/s41884-021-00052-8},
       URL = {https://doi.org/10.1007/s41884-021-00052-8},
}

@article{del2020statistical,
  title={The statistical effect of entropic regularization in optimal transportation},
  author={del Barrio, Eustasio and Loubes, Jean-Michel},
  journal={arXiv preprint arXiv:2006.05199},
  year={2020}
}

@book {bgl14,
    AUTHOR = {Bakry, Dominique and Gentil, Ivan and Ledoux, Michel},
     TITLE = {Analysis and geometry of {M}arkov diffusion operators},
    SERIES = {Grundlehren der mathematischen Wissenschaften [Fundamental
              Principles of Mathematical Sciences]},
    VOLUME = {348},
 PUBLISHER = {Springer, Cham},
      YEAR = {2014},
     PAGES = {xx+552},
      ISBN = {978-3-319-00226-2; 978-3-319-00227-9},
   MRCLASS = {60J25 (58J65 60J35 60J60)},
  MRNUMBER = {3155209},
MRREVIEWER = {Ming\ Liao},
       DOI = {10.1007/978-3-319-00227-9},
       URL = {https://doi.org/10.1007/978-3-319-00227-9},
}

@book {evans10,
    AUTHOR = {Evans, Lawrence C.},
     TITLE = {Partial differential equations},
    SERIES = {Graduate Studies in Mathematics},
    VOLUME = {19},
   EDITION = {Second},
 PUBLISHER = {American Mathematical Society, Providence, RI},
      YEAR = {2010},
     PAGES = {xxii+749},
      ISBN = {978-0-8218-4974-3},
   MRCLASS = {35-01},
  MRNUMBER = {2597943},
MRREVIEWER = {Diego\ M.\ Maldonado},
       DOI = {10.1090/gsm/019},
       URL = {https://doi.org/10.1090/gsm/019},
}

@article{peyre2019computational,
  title={Computational optimal transport},
  author={Peyr{\'e}, Gabriel and Cuturi, Marco},
  journal={Foundations and Trends{\textregistered} in Machine Learning},
  volume={11},
  number={5-6},
  pages={355--607},
  year={2019},
  publisher={Now Publishers, Inc.}
}

@article {adams11,
    AUTHOR = {Adams, Stefan and Dirr, Nicolas and Peletier, Mark A. and
              Zimmer, Johannes},
     TITLE = {From a large-deviations principle to the {W}asserstein
              gradient flow: a new micro-macro passage},
   JOURNAL = {Comm. Math. Phys.},
  FJOURNAL = {Communications in Mathematical Physics},
    VOLUME = {307},
      YEAR = {2011},
    NUMBER = {3},
     PAGES = {791--815},
      ISSN = {0010-3616,1432-0916},
   MRCLASS = {60K35 (60F10)},
  MRNUMBER = {2842966},
MRREVIEWER = {Boualem\ Djehiche},
       DOI = {10.1007/s00220-011-1328-4},
       URL = {https://doi.org/10.1007/s00220-011-1328-4},
}

@article {duong13,
    AUTHOR = {Duong, Manh Hong and Laschos, Vaios and Renger, Michiel},
     TITLE = {Wasserstein gradient flows from large deviations of
              many-particle limits},
   JOURNAL = {ESAIM Control Optim. Calc. Var.},
  FJOURNAL = {ESAIM. Control, Optimisation and Calculus of Variations},
    VOLUME = {19},
      YEAR = {2013},
    NUMBER = {4},
     PAGES = {1166--1188},
      ISSN = {1292-8119,1262-3377},
   MRCLASS = {35Q84 (35A15)},
  MRNUMBER = {3182684},
       DOI = {10.1051/cocv/2013049},
       URL = {https://doi.org/10.1051/cocv/2013049},
}

@article {erbar15,
    AUTHOR = {Erbar, Matthias and Maas, Jan and Renger, D. R. Michiel},
     TITLE = {From large deviations to {W}asserstein gradient flows in
              multiple dimensions},
   JOURNAL = {Electron. Commun. Probab.},
  FJOURNAL = {Electronic Communications in Probability},
    VOLUME = {20},
      YEAR = {2015},
     PAGES = {no. 89, 12},
      ISSN = {1083-589X},
   MRCLASS = {60F10 (35A15 35B27 35Q84 35R60 60B10)},
  MRNUMBER = {3434206},
       DOI = {10.1214/ECP.v20-4315},
       URL = {https://doi.org/10.1214/ECP.v20-4315},
}

@article {delmoral18,
    AUTHOR = {Del Moral, Pierre and Niclas, Ang\`{e}le},
     TITLE = {A {T}aylor expansion of the square root matrix function},
   JOURNAL = {J. Math. Anal. Appl.},
  FJOURNAL = {Journal of Mathematical Analysis and Applications},
    VOLUME = {465},
      YEAR = {2018},
    NUMBER = {1},
     PAGES = {259--266},
      ISSN = {0022-247X,1096-0813},
   MRCLASS = {65F60 (41A58)},
  MRNUMBER = {3806701},
       DOI = {10.1016/j.jmaa.2018.05.005},
       URL = {https://doi.org/10.1016/j.jmaa.2018.05.005},
}

@article {fathi20,
    AUTHOR = {Fathi, Max and Gozlan, Nathael and Prod'homme, Maxime},
     TITLE = {A proof of the {C}affarelli contraction theorem via entropic
              regularization},
   JOURNAL = {Calc. Var. Partial Differential Equations},
  FJOURNAL = {Calculus of Variations and Partial Differential Equations},
    VOLUME = {59},
      YEAR = {2020},
    NUMBER = {3},
     PAGES = {Paper No. 96, 18},
      ISSN = {0944-2669,1432-0835},
   MRCLASS = {60A10 (28A33 49J55 49Q22)},
  MRNUMBER = {4098037},
       DOI = {10.1007/s00526-020-01754-0},
       URL = {https://doi.org/10.1007/s00526-020-01754-0},
}

@article {gilgi11,
    AUTHOR = {Gigli, Nicola},
     TITLE = {On {H}\"older continuity-in-time of the optimal transport map
              towards measures along a curve},
   JOURNAL = {Proc. Edinb. Math. Soc. (2)},
  FJOURNAL = {Proceedings of the Edinburgh Mathematical Society. Series II},
    VOLUME = {54},
      YEAR = {2011},
    NUMBER = {2},
     PAGES = {401--409},
      ISSN = {0013-0915,1464-3839},
   MRCLASS = {49Q20 (28A33)},
  MRNUMBER = {2794662},
MRREVIEWER = {Michele\ Miranda},
       DOI = {10.1017/S001309150800117X},
       URL = {https://doi.org/10.1017/S001309150800117X},
}

@article {sadhu2024approximationratesentropicmaps,
    AUTHOR = {Sadhu, Ritwik and Goldfeld, Ziv and Kato, Kengo},
     TITLE = {Approximation rates of entropic maps in semidiscrete optimal
              transport},
   JOURNAL = {Electron. Commun. Probab.},
  FJOURNAL = {Electronic Communications in Probability},
    VOLUME = {30},
      YEAR = {2025},
     PAGES = {Paper No. 36, 13},
      ISSN = {1083-589X},
   MRCLASS = {49Q22 (49N15 60F05)},
  MRNUMBER = {4895452},
MRREVIEWER = {Sinho\ Chewi},
       DOI = {10.1214/25-ecp682},
       URL = {https://doi.org/10.1214/25-ecp682},
}

\end{document}